\documentclass[12pt,letterpaper]{article}

\pagestyle{plain}  

\setlength{\textwidth}{6.5in}
\setlength{\oddsidemargin}{0in}
\setlength{\evensidemargin}{0in}
\setlength{\textheight}{8.5in}  
\setlength{\topmargin}{0in}  
\setlength{\headheight}{0in}
\setlength{\headsep}{0in}    
\setlength{\footskip}{.5in}   

\newcommand{\required}[1]{\section*{\hfil \sharp1\hfil}}

\usepackage{verbatim,url}

\usepackage{amssymb,amsmath,amsfonts,amsthm}
\usepackage{hyperref}
\usepackage{upref}
 \newtheorem{example}{Example}
  \newtheorem{prop}{Proposition}
    \newtheorem{proposition}{Proposition}
\newcommand{\Beq}{\begin{equation}}
\newcommand{\Eeq}{\end{equation}}
\newcommand{\beq}{\begin{equation*}}
\newcommand{\eeq}{\end{equation*}}
\newcommand{\bal}{\begin{align}}
\newcommand{\eal}{\end{align}}

\newcommand{\I}{\mathrm{i}}
\newcommand{\D}{\mathrm{d}}

\newcommand{\epr}{\end{proof}}

\newcommand{\lb}{\left(}

\newcommand{\vp}{\varphi}

\newcommand{\rb}{\right)}
\newcommand{\PD}{\partial}
\newcommand{\Ac}{\mathcal{A}}
\newcommand{\Bc}{\mathcal{B}}
\newcommand{\Cc}{\mathcal{C}}

\newcommand{\Ec}{\mathcal{E}}

\newcommand{\Jc}{\mathcal{J}}

\newcommand{\Nc}{\mathcal{N}}

\newcommand{\Pc}{\mathcal{P}}

\newcommand{\Rc}{\mathcal{R}}
\newcommand{\Sc}{\mathcal{S}}

\newcommand{\bpr}{\begin{proof}}

\newcommand{\Rb}{\mathbb{R}}
\newcommand{\Sb}{\mathbb{S}}

\newcommand{\g}{\boldsymbol{\gamma}}

\newcommand{\A}{\alpha}
\newcommand{\NS}{\Nc\Sc}
\newcommand{\RS}{\Rc\Sc}
\newcommand{\CS}{\Cc\Sc}
\newcommand{\B}{\beta}

\newcommand{\st}{\,:\,}
\newcommand{\wt}{\widetilde}
\newtheorem{theorem}{Theorem}
\newtheorem{lemma}{Lemma}
\newtheorem{definition}{Definition}

\theoremstyle{definition}

\newtheorem{remark}{Remark}

\newcommand{\rr}{\mathbb{R}}
\newcommand{\om}{\omega}

%\usepackage[
%sorting=ydnt,
%bibstyle=alphabetic,
%citestyle=alphabetic,
%backend=biber]{biblatex}
%\addbibresource{Refs.bib}
%bibstyle=alphabetic,
%citestyle=alphabetic,
%backend=biber]{biblatex}
%\addbibresource{Refs.bib}
\newcommand{\TheTitle}{Microlocal analysis of a restricted ray transform} 
\newcommand{\TheAuthors}{Krishnan and Mishra}

% Sets running headers as well as PDF title and authors
%\headers{\TheTitle}{\TheAuthors}
\title{{{Microlocal analysis of a restricted ray transform on symmetric \lowercase{$m$}-tensor fields in $\Rb^{n}$
}}}
\date{}
\author{
  Venkateswaran P. Krishnan\thanks{TIFR Centre for Applicable Mathematics, Bangalore, Karnataka, India
 (vkrishnan@math.tifrbng.res.in)}
\and 
Rohit Kumar Mishra\thanks{Department of Mathematics, University of California, Santa Cruz, USA (rokmishr@ucsc.edu)}}
\ifpdf
\hypersetup{
  pdftitle={\TheTitle},
  pdfauthor={\TheAuthors}
}
\fi

\begin{document}
\maketitle
\begin{abstract}
We study the microlocal inversion of the ray transform on symmetric $m$-tensor fields  restricted to all lines passing through a curve in $\mathbb{R}^{n}$. From this incomplete data, we show that the wavefront set of the solenoidal component of a symmetric $m$-tensor field can be recovered modulo a known singular error term.   
\end{abstract}
%\begin{keywords}
%  Restricted ray transforms, symmetric tensor fields, Kirillov-Tuy condition, Singular Fourier integral operators, Singular pseudodifferential operators
%\end{keywords}
\section{Introduction} Let $S^{m}=S^{m}(\Rb^{n})$ denote the space of covariant symmetric $m$-tensors in $\Rb^{n}$ and $C_{c}^{\infty}(S^{m})$ be the space of smooth compactly supported symmetric $m$-tensor fields in $\Rb^{n}$. In standard Euclidean coordinates, an element $f\in C_{c}^{\infty}(S^{m})$ can be written as 
\[
f(x)=f_{i_{1}\cdots i_{m}}(x)\D x^{i_{1}}\cdots \D x^{i_{m}},
\] with $\{f_{i_{1}\cdots i_{m}}(x)\}$ symmetric in its components, smooth and compactly supported. Here and elsewhere in the paper with repeating indices, Einstein summation convention will be assumed. 

The ray transform $\Rc$ of a symmetric $m$-tensor field $f\in C^\infty_{c}(S^m)$ is the function $\Rc f$ defined on $\Rb^{n}\times \Sb^{n-1}$ %the space of oriented lines parametrized by $(x,\omega)\in T \Sb^{n-1}$ 
as   \cite{Sharafutdinov_Book}
\Beq\label{Ray transform}
	\mathcal{R}f(x,\om) = \int_{\mathbb{R}}\langle f(x+t\om), \om^{m} \rangle \D t = \int_{\mathbb{R}}f_{i_1\dots i_m}(x+t\om)\om^{i_1}\dots \om^{i_m} \D t.
\Eeq
%where $\om^{m}$ denotes the $m$-symmetric product of $\om$.

For $\vp \in C_{c}^{\infty}(\Rb^{n}\times \Sb^{n-1})$, define the operator $\Jc \vp \in C^{\infty}(\Sb^{m})$ as \cite{Sharafutdinov_Book}
\[
\lb\Jc \vp\rb_{i_{1}\cdots i_{m}}(x)=\int\limits_{\Sb^{n-1}} \om_{i_{1}}\cdots \om_{i_{m}} \lb \int \limits_{\Rb} \vp(x-t\om,\om) \D t \rb \D S_{\om}.
\]
After a change of variable, we have 
\[
\langle If,\vp\rangle_{L^{2}(\Rb^{n}\times \Sb^{n-1})}=\langle f,\Jc \vp\rangle_{L^{2}(S^{m})},
\]
where the $L^{2}$ inner products are defined in the usual manner; see \cite{Sharafutdinov_Book}. Now, using the operator $\Jc$, the definition of $\Rc$ can be extended to compactly supported tensor field distributions \cite[\S 2.5]{Sharafutdinov_Book}.
\begin{comment}
The dual transform $\Rc^{*}$ of $\Rc$ is defined as  \begin{align*}
	\langle \mathcal{R}^{*} g(x), f(x) \rangle_{L^{2}(\Rb^{n})} &= \langle g(x,\om), \mathcal{R} f(x,\om) \rangle_{L^{2}(T\Sb^{n-1})}\\
	&= \langle g(x,\om), \om^{i_1}\dots \om^{i_m}\Rc f_{i_1\dots i_m}(x,\om) \rangle_{L^{2}(T\Sb^{n-1}).}\\
	\intertext{By abuse of notation, we use the same $\Rc$ when referring to ray transform of functions as well, as is the case in the line above. After a change of variable, this is}
	&= \langle  \Rc^{*}( \om^{i_1}\dots \om^{i_m} g)(x),f_{i_1\dots i_m}(x) \rangle
\end{align*}
where $ \Rc^{*}g(x) = \int_{\mathbb{S}^{n-1}}g(x-\langle x,\om\rangle \om, \om)d\om$ is the dual of the ray transform. 
\end{comment}

%Using this we can see $\mathcal{R}^t\mathcal{R}$  can be viewed as $2m$- tensor whose $i_1\dots i_m j_1\dots j_m$-th components $R^t\om^{i_1}\dots \om^{i_m}\om^{j_1}\dots \om^{j_m}R$.
We are interested in the inversion of the ray transform $\Rc$. It is well known %from the work of Sharafutdinov 
\cite{Sharafutdinov_Book} that only the solenoidal component $f^{s}$ of a symmetric tensor field $f$ can be recovered from the transform $\Rc$. In dimensions $n\geq 3$, %Since the space of lines in $\Rb^{n}$ is $2(n-1)$ dimensional, 
the problem of recovery of $f^{s}$ given the ray transform $\Rc f$ is over-determined. % for $n\geq 3$. 
Therefore it is natural to study the inversion of $\Rc$ restricted to an $n$-dimensional data set. We call this problem as the incomplete data inverse problem, and address this in this paper. 

An extensive amount of literature exists in the study of scalar restricted ray transform (that is, ray transform on functions) in Euclidean as well as in Riemannian geometries \cite{Blagoveschchenskii,Mukhometov_Paper,Gelfand-Gindikin-Graev,Kirillov,Tuy,Greenleaf-Uhlmann-Duke1989, Boman-Quinto,Denisjuk1994,LanThesis,FLU,SU3,K1}. A few results exist \cite{Schuster2000,Schuster2001,Ramaseshan2004,Palamodov2009, Katsevich2013} that deal with the study of restricted Doppler transform,
and even fewer for the case of restricted ray transforms on higher order tensor fields. 
The papers \cite{Vertgeim2000,Denisjuk_Paper,Sharafutdinov2007} are the few works that we are aware of that deal with inversion of the ray transform of symmetric tensor fields  on incomplete data sets. In \cite{Vertgeim2000}, Vertgeim gave an inversion formula recovering the solenoidal component of a symmetric $m$-tensor field in $\Rb^{n}$ with the incomplete data set being all lines passing through a fixed curve satisfying a Kirillov-Tuy condition. Later in \cite{Denisjuk_Paper}, an alternate approach was given for the reconstruction (with an inversion formula) of the solenoidal component of a symmetric $m$-tensor field in $\Rb^{3}$ for the same incomplete data set as considered by Vertgeim, but the Kirillov-Tuy condition used by Denisjuk required fewer number of intersection points (see Definition \ref{K-T Defn}) compared to that of Vertgeim's. % making the inverse problem more difficult.
 In \cite{Sharafutdinov2007}, Sharafutdinov gave a slice-by-slice reconstruction procedure recovering the solenoidal component of a vector field and of a symmetric $2$-tensor field in $\Rb^{3}$ with the incomplete data set consisting of all lines parallel to a certain number of planes.  We note that the inversion formula in the work of Denisjuk \cite{Denisjuk_Paper} works only for the case of restricted ray transform on symmetric tensor fields in $\Rb^{3}$. With the Kirillov-Tuy condition used by Denisjuk, it is an interesting open question to derive an inversion procedure for the restricted ray transform on symmetric $m$-tensor fields in $\Rb^{n}$ for $n>3$.

We take a microlocal analysis approach to the study of the incomplete data tensor tomography problem. By this we mean the reconstruction of singularities of the symmetric tensor field $f$ (to the extent possible) given its ray transform. Starting with the fundamental work of Guillemin \cite{Guillemin} and  Guillemin-Sternberg \cite{Guillemin-Sternberg-AJM}, who first studied generalized Radon transform in the framework of Fourier integral operators, there have been a substantial amount of work involving microlocal analysis in the study of generalized Radon transforms \cite{GS, Greenleaf-Uhlmann-Duke1989,  Boman-Quinto-Duke, Boman1993,SU1, Katsevich2002, Lan2003,Ramaseshan2004, SU2, SU3,Uhlmann-Vasy}. The paper \cite{Greenleaf-Uhlmann-Duke1989} is a fundamental work where Greenleaf and Uhlmann studied an incomplete data ray transform on functions in the setting of Riemannian manifolds. Three related works in the direction, but in the Euclidean setting, are \cite{LanThesis,Lan2003,Ramaseshan2004}. Of these the first two deal with ray transform on functions, and the third work deals with a restricted Doppler transform on vector fields in $\Rb^{3}$, and the latter is the only result that we are aware of that deals with the microlocal analysis of a restricted ray transform on objects other than functions. %higher order tensor fields.

In this article, we study the microlocal inversion of the Euclidean ray transform on symmetric $m$-tensor fields given the incomplete data set consisting of all lines passing through a fixed curve $\g$ in $\Rb^{n}$. The ray transform $\Rc$ defined in \eqref{Ray transform} restricted to lines passing through the curve $\g$ will be denoted by $\Rc_{\g}$ and its formal $L^{2}$ adjoint by $\Rc_{\g}^{*}$. We determine the extent to which the wavefront set of a symmetric $m$-tensor field can be recovered from the wavefront set of its restricted ray transform. Loosely speaking, our main result can be described as follows. Given the restricted ray transform $\Rc_{\g}$ of a symmetric tensor field $f$, we construct an approximate inverse reconstructing the solenoidal component $f^{s}$ of $f$ modulo a lower order known artifact term and a smoothing term.

The main motivation for this study comes from the works on this topic \cite{Greenleaf-Uhlmann-Duke1989,LanThesis, Lan2003,Ramaseshan2004} as already mentioned. We follow the techniques of \cite{LanThesis, Ramaseshan2004} to extend Ramaseshan's result \cite{Ramaseshan2004} to the case of restricted ray transform on symmetric $m$-tensor fields in $\Rb^{n}$. 

The article is organized as follows. In \S \ref{sect:P}, we state some fundamental results about distributions associated to two cleanly intersecting Lagrangians introduced by \cite{Melrose-Uhlmann, Guillemin-Uhlmann} and microlocal results relevant for the analysis of our transform. We then state the main result of the article. \S \ref{sect:pc} is devoted to analysis of the microlocal properties of the restricted ray transform. 
%and thsome preliminary calculations involving the canonical relation of the restricted ray transform. In \S \ref{sect:Statement_main_result}, we state the main results and the proof is given 
The proof of the main result is given in  \S \ref{sect:principal symbol}-\ref{sect:microlocal_inversion}. Finally in \S \ref{sect:examples}, we give some examples connected to the higher order Kirillov-Tuy condition that we use in this article.
\section{Preliminary definitions and statement of the main result}\label{sect:P}
In the first part of this section, we state the definition of distributions associated to two cleanly intersecting Lagrangians due to \cite{Melrose-Uhlmann,Guillemin-Uhlmann} and a composition calculus for a class of such distributions due to Antnoniano-Uhlmann \cite{AntonianoandUhlmann}.
In the second part, we state the main result.
\subsection{Singularities and $I^{p,l}$ classes}
	Here we give the definitions of the singularities associated with the operator $\Rc_{\g}$  as well as its associated canonical relation, and a class of
	distributions required for the analysis of the normal  operator
	$\Rc_{\g}^{*}\Rc_{\g}$.
	
	\begin{definition} \cite{GU1990b} Let $M$ and $N$ be manifolds of
		dimension $n$ and let $f:M \to N$ be $C^\infty$. Define $\Sigma= \{m
		\in M \st \det(df)_{m} = 0\}$.
	\begin{enumerate}
			\item We say $f$ drops rank \emph{simply by one} on $\Sigma$ if for each $m_{0}\in
			\Sigma$, rank~$(\D f)_{m_{0}}=n-1$ and $\D (\det(\D f)_{m_{0}})\neq 0$.
			
			\item $f$ has a \emph{Whitney fold} along $\Sigma$ if $f$ is a local
			diffeomorphism away from $\Sigma$ and $f$ drops rank simply by one on
			$\Sigma$, and $\ker \:(\D 
			f)_{m_{0}} \not \subset T_{m_{0}}\Sigma$ for every $m_{0}\in \Sigma$.

			\item $f$ is a \emph{blow-down} along $\Sigma$ if $f$ is a local
			diffeomorphism away from $\Sigma$ and $f$ drops rank by one simply on
			$\Sigma$, and $\ker
			(\D f)_{m_{0}} \subset T_{m_{0}}(\Sigma)$ for every $m_{0}\in \Sigma$.
		\end{enumerate}
	\end{definition}
We first define product-type symbols introduced in \cite{Guillemin-Uhlmann} which is then used to define $I^{p,l}$ classes of distributions.
	\begin{definition} \cite{Guillemin-Uhlmann}
		For $m \in Z^+$ and $p,l \in \mathbb{R}$, the space of product type symbols $S^{p,l}(\mathbb{R}^m;\mathbb{R}^n,\mathbb{R}^k)$ is the set of all smooth functions $a(x,\xi,\sigma)$ on $\mathbb{R}^m\times\mathbb{R}^n\times\mathbb{R}^k$ such that given $ K \subset \mathbb{R}^m$ compact, and multi-indices $\A,\B$ and $\g$, there exists a constant $C=C(K,\A,\B,\g)$ such that 
		$$ |\partial_\xi^\alpha \partial_\sigma^\beta\partial_x^\gamma a(x,\xi,\sigma) \leq C(1+|\xi|)^{p-|\alpha|}(1+|\sigma|)^{l-|\beta|}.$$
	\end{definition}
	Next we define $I^{p,l}$ classes of distributions. They were first introduced by Melrose
	and Uhlmann \cite{MU}, Guillemin and Uhlmann \cite{Guillemin-Uhlmann}
	and Greenleaf and Uhlmann \cite{GreenleafandUhlmann}.
	\begin{definition}		Two submanifolds $M$ and $N$
		intersect {\it cleanly} if $M \cap N$ is a smooth submanifold and $T(M
		\cap N)=TM \cap TN$.
		
		The pair $(M, N)$ is said to be a cleanly intersecting pair in codimension $k$ if $M$ and $N$ are cleanly intersecting and $M\cap N$ is of codimension $k$ as submanifold of both $M$ and $N$.
	\end{definition}
Consider two manifolds $X$ and $Y$ and let $\Lambda_0, \Lambda_1$, 
	and $\wt{\Lambda}_0, \wt{\Lambda}_1$ be two pairs of Lagrangian
	submanifolds of the product space $T^*X \times T^*Y$.  If $\Lambda_{0},\Lambda_{1}$ as well as $\wt{\Lambda}_{0}, \wt{\Lambda}_{1}$ 
	intersect cleanly, then  $(\tilde{\Lambda}_0, \tilde{\Lambda}_1)$ and
	$(\Lambda_0, \Lambda_1)$ are equivalent in the sense that, there is,
	microlocally, a canonical transformation $\chi$ which maps
	$(\Lambda_0, \Lambda_1)$ into $(\tilde{\Lambda}_0,
	\tilde{\Lambda}_1)$.  This leads us to the following model case.
	
	\begin{example}\label{example:basic} Let
		$\wt{\Lambda}_0=\Delta_{T^*\rr^n}=\{ (x, \xi; x, \xi)\st x \in \rr^n, \
		\xi \in \rr^n \setminus 0 \}$ be the diagonal in $T^*\rr^n \times T^*\rr^n$
		and let $\wt{\Lambda}_1= \{ (x', x'', 0, \xi''; y', x'', 0, \xi'')\st
		(x',x'') \in \rr^{k}\times \Rb^{n-k}, \ \xi'' \in \rr^{n-k} \setminus \{0\} \}$.  Then
		$\tilde{\Lambda}_0$ intersects $ \tilde{\Lambda}_1$ cleanly in
		codimension $k$ on $\Sigma=\{(x,\xi;x,\xi): \xi'=0\}$.
	\end{example}
	Since any two sets of cleanly intersecting Lagrangians are equivalent,
	we first define $I^{p,l}$ classes for the case in Example
	\ref{example:basic}.
	
	\begin{definition}\cite{Guillemin-Uhlmann}    Let $I^{p,l}(\tilde{\Lambda}_0,
		\tilde{\Lambda}_1)$ be the set of all distributions $u$ such that
		$u=u_1 + u_2$ with $u_1 \in C^{\infty}_0$ and $$u_2(x,y)=\int
		e^{\I((x'-y'-s)\cdot \xi'+(x''-y'') \cdot \xi''+ s \cdot \sigma)}
		a(x,y,s; \xi,\sigma)\D \xi \D \sigma \D s$$ with $a \in S^{p',l'}$ where
		$p'=p-\frac{n}{2}+\frac{k}{2}$ and $l'=l-\frac{k}{2}$.
	\end{definition}
Now based on the equivalence of cleanly intersecting Lagrangian pairs, the general definition of $I^{p,l}$ classes is as follows:

	\begin{definition}   \cite{Guillemin-Uhlmann} Let  $I^{p,l}( \Lambda_0, \Lambda_1)$
		be the set of all distributions $u$ such that $ u=u_1 + u_2 + \sum
		v_i$ where $u_1 \in I^{p+l}(\Lambda_0 \setminus \Lambda_1)$, $u_2 \in
		I^{p}(\Lambda_1 \setminus \Lambda_0)$, the sum $\sum v_i$ is locally
		finite and $v_i=A_{i}w_i$ where $A$ is a zeroth order FIO associated to
		$\chi_{i} ^{-1}$, the canonical transformations stated above Example \ref{example:basic}, and $w_i \in
		I^{p,l}(\tilde {\Lambda}_0, \tilde{\Lambda}_1)$.
	\end{definition}
	
	The $I^{p,l}$ class of distributions satisfies the following properties \cite{Guillemin-Uhlmann}:
	\begin{enumerate}
		\item  $ WF(u) \subset \Delta\cup \Lambda$.
			\item Microlocally, the Schwartz kernel of $u$ equals the Schwartz kernel of a pseudodifferential operator of
		order $p + l$ on $\Delta\setminus\Lambda$ and that of a classical Fourier integral operator of order $p$
		on $\Lambda\setminus\Delta$.
		\item $I^{p,l}\subset I^{p^\prime,l^\prime} \text{ if } p\leq p^\prime \text{ and } l\leq l^\prime$.
			\item  $\cap_lI^{p,l}(\Delta,\Lambda) \subset I^p(\Lambda)$.
		\item $\cap_p I^{p,l}(\Delta,\Lambda) \subset $ The class of smoothing operators.
				\item The principal symbol $\sigma_{0}(u)$ on $\Delta \setminus \Sigma$ has the singularity on $\Sigma$ as a conormal distribution of order $l-\frac{k}{2}$, where $k$ is the codimension of $\Sigma$ as a submanifold of $\Delta$ or $\Lambda$.
		\item  If the principal symbol  $\sigma_{0}(u) = 0$ on $ \Delta \setminus \Sigma$, then $ u \in I^{p,l-1}(\Delta,\Lambda)+I^{p-1,l}(\Delta,\Lambda)$.
		\item $u$ is said to be elliptic, if the principal symbol $\sigma_{0}(u)\neq 0$ on $\Delta\setminus \Sigma$ if $k\geq 2$, and for $k=1$, if $\sigma_{0}(u)\neq 0$ on each connected component of $\Delta\setminus \Sigma$.
		
	\end{enumerate}

	In the case that $\Lambda$ is a flowout, Antoniano and Uhlmann \cite{AntonianoandUhlmann} derived a composition calculus for  such  $I^{p,l}(\Delta,\Lambda)$ classes: 
	\begin{theorem}[\cite{AntonianoandUhlmann}]
		If $A \in I^{ p,l} (\Delta,\Lambda )$ and $B \in I^{ p^\prime,l^\prime} ( \Delta,\Lambda)$, then composition of $A$ and $B$,
		$A\circ B \in I^{ p+p^\prime +\frac{k}{2},l+l^\prime - \frac{k}{2}} (\Delta,\Lambda)$ and the prinicipal symbol, $\sigma_0 (A \circ B) = \sigma_0 (A)\sigma_0 (B)$, where, $k$ is the codimension of $\Sigma$ as a submanifold of either $\Delta$ or $\Lambda$.
	\end{theorem}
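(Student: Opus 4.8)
The plan is to reduce everything to the model pair of Example~\ref{example:basic} and then to compose the two Schwartz kernels as explicit oscillatory integrals, keeping track of the two symbol orders separately.

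\emph{Step 1 (reduction to the model).} Because $\Lambda$ is a flowout, $(\Delta,\Lambda)$ is a cleanly intersecting pair, so by the equivalence of cleanly intersecting Lagrangian pairs recalled above Example~\ref{example:basic} there are microlocally elliptic zeroth order Fourier integral operators $A_i$, associated with canonical transformations $\chi_i$, carrying $(\Delta,\Lambda)$ to $(\wt\Lambda_0,\wt\Lambda_1)$. Conjugation by the $A_i$ is an isomorphism of the spaces $I^{p,l}$ that preserves both orders, intertwines the principal symbol with its pull-back under $\chi_i$, and preserves the flowout structure. After a microlocal partition of unity it therefore suffices to prove the statement for $A,B\in I^{p,l}(\wt\Lambda_0,\wt\Lambda_1)$, where $\wt\Lambda_1$ is the flowout from $\Sigma=\{(x,\xi;x,\xi):\xi'=0\}$ and $k$ is the (common) codimension of $\Sigma$, equivalently the dimension of the fibres of $\wt\Lambda_1\to\Sigma$.

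\emph{Step 2 (composition in the model).} Writing, modulo smooth kernels,
\beq
K_A(x,z)=\int e^{\I\psi(x,z,s;\xi,\sigma)}a(x,z,s;\xi,\sigma)\,\D\xi\,\D\sigma\,\D s,\qquad K_B(z,y)=\int e^{\I\psi(z,y,t;\eta,\tau)}b(z,y,t;\eta,\tau)\,\D\eta\,\D\tau\,\D t,
\eeq
with $\psi(x,y,s;\xi,\sigma)=(x'-y'-s)\cdot\xi'+(x''-y'')\cdot\xi''+s\cdot\sigma$, $a\in S^{p-\frac n2+\frac k2,\,l-\frac k2}$ and $b\in S^{p'-\frac n2+\frac k2,\,l'-\frac k2}$, the kernel of $A\circ B$ is $\int K_A(x,z)K_B(z,y)\,\D z$. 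In the combined phase the variables $(z,\eta)$ occur only through the non-degenerate pairing generated by $z\cdot(\eta-\xi)$; evaluating this $2n$-dimensional integral by stationary phase forces $\eta=\xi$, $z=(y'+t,y'')$, and expands $a,b$ about that point. With $u:=s+t$, $\nu:=-t$, $\mu:=\sigma-\tau$ one has $s\cdot\sigma+t\cdot\tau=u\cdot\sigma+\nu\cdot\mu$, so the surviving phase is $\psi(x,y,u;\xi,\sigma)+\nu\cdot\mu$, in which the $2k$ ``excess'' variables $(\mu,\nu)$ appear only through $\nu\cdot\mu$. A second stationary phase in $(\mu,\nu)$ leaves, modulo a smooth kernel,
\beq
(A\circ B)(x,y)=\int e^{\I\psi(x,y,u;\xi,\sigma)}c(x,y,u;\xi,\sigma)\,\D\xi\,\D\sigma\,\D u,
\eeq
an oscillatory integral of exactly the model form, with $c$ an asymptotic series in the two scales $|\xi|$ and $|\sigma|$.

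\emph{Step 3 (orders, principal symbol, obstacle).} The two order shifts are forced by consistency with property~(2): on $\Delta\setminus\Sigma$ elements of $I^{p,l}$ are kernels of pseudodifferential operators of order $p+l$, whose composition has order $(p+l)+(p'+l')$; on $\Lambda\setminus\Delta$ they are Fourier integral operators of order $p$, and since $\Lambda$ is a flowout one has $\Lambda\circ\Lambda=\Lambda$ as a \emph{clean} composition with excess equal to the fibre dimension $k$, under which FIO orders compose as $p\mapsto p+p'+\tfrac k2$. Solving $P+L=(p+l)+(p'+l')$ and $P=p+p'+\tfrac k2$ gives $P=p+p'+\tfrac k2$, $L=l+l'-\tfrac k2$; the model computation then confirms that $c$ obeys the product type estimates of $S^{P-\frac n2+\frac k2,\,L-\frac k2}$ — the shifts $\pm\tfrac k2$ coming from the $2k$-dimensional excess integration — so $A\circ B\in I^{P,L}(\Delta,\Lambda)$ and not merely in a larger class, while convergence of the series modulo smoothing uses properties~(3)--(5). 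The leading term of the expansion of $c$ equals, up to the universal constant produced by the two stationary phases, the product of the leading symbols of $a$ and $b$ at the common stationary point; passing to principal symbols via properties~(2) and~(6) yields $\sigma_0(A\circ B)=\sigma_0(A)\,\sigma_0(B)$ on $\Delta\setminus\Sigma$, with compatibility across $\Sigma$ and the matching statement on $\Lambda\setminus\Delta$ (with its Maslov and half-density factors) following from continuity of the construction. The main obstacle is the symbol-class bookkeeping in Step~3: one must check the product type estimates \emph{uniformly} as $|\xi|,|\sigma|\to\infty$, including in the transition region $|\sigma|\lesssim|\xi|$ where $\Delta$ and $\Lambda$ are close, and justify the two stationary phase reductions for amplitudes carrying two distinct frequency scales. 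The flowout hypothesis enters precisely here: it forces $\Delta\circ\Lambda=\Lambda\circ\Delta=\Lambda\circ\Lambda=\Lambda$ with clean excess $k$, so the composition creates no new Lagrangian components and remains in $I^{p,l}(\Delta,\Lambda)$.
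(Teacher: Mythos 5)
The paper does not prove this statement at all: it is quoted verbatim from \cite{AntonianoandUhlmann} (building on \cite{Guillemin-Uhlmann}), so there is no in-paper argument to compare against and your sketch has to be judged on its own merits. Your overall roadmap is the right one and matches the strategy of the cited proof: reduce to the model pair of Example \ref{example:basic} by the equivalence of cleanly intersecting pairs, write both kernels as model oscillatory integrals with product-type amplitudes, and compose. Your phase bookkeeping in Step 2 is correct: integrating in $(z,\eta)$ forces $\eta=\xi$, $z=(y'+t,y'')$, and the substitution $u=s+t$, $\nu=-t$, $\mu=\sigma-\tau$ does turn the residual phase into $\psi(x,y,u;\xi,\sigma)+\nu\cdot\mu$, so the composition is again of model form. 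The consistency check in Step 3 (pseudodifferential order $p+l$ on $\Delta\setminus\Sigma$, clean composition $\Lambda\circ\Lambda=\Lambda$ with excess $k$ giving FIO order $p+p'+\tfrac k2$ on $\Lambda\setminus\Delta$) correctly pins down the only possible bi-order.

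The genuine gap is that Step 3 establishes necessity, not membership. Knowing what $(P,L)$ must be \emph{if} $A\circ B$ lies in some $I^{P,L}(\Delta,\Lambda)$ does not show that it lies in any such class: membership is a joint two-scale statement about the amplitude near $\Sigma$, strictly stronger than the two microlocal statements away from $\Sigma$, and the whole analytic content of the Antoniano--Uhlmann theorem is exactly the verification that the amplitude $c$ produced by your two reductions satisfies the product-type estimates of bi-order $\bigl(P-\tfrac n2+\tfrac k2,\,L-\tfrac k2\bigr)$ uniformly in all regimes of $|\xi|$ and $|\sigma|$, including the transition region. In particular the $(\mu,\nu)$ integral cannot simply be "evaluated at $\mu=0$ plus lower order": the factor $b(\cdot,\cdot,t;\xi,\sigma-\mu)$ has to be controlled as $\mu$ ranges over all of $\mathbb{R}^k$, where $|\sigma-\mu|$ passes through every scale relative to $|\xi|$, and it is precisely this estimate (together with the flowout structure) that produces the $+\tfrac k2$ and $-\tfrac k2$ shifts and shows the result is no worse than claimed. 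You name this as "the main obstacle" but do not carry it out, so the proposal is an accurate roadmap rather than a proof. Two smaller points to tighten if you pursue it: the microlocal conjugations to the model must be arranged with a common elliptic zeroth-order FIO on the middle factor (and a partition of unity whose cross terms are shown to be smoothing) so that conjugation commutes with composition; and the principal-symbol identity should be stated and proved on $\Delta\setminus\Sigma$ from the leading term of the expansion, rather than appealing to "continuity" across $\Sigma$.
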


\subsection{Statement of the main result}\label{sect:Statement_main_result} In order to invert the restricted ray transform (restricted to lines passing through a curve), we need to place some conditions on the curve $\g$. We state them and proceed to the main result of this article. %give a sufficient condition, the so-called Kirillov-Tuy condition  on the curve  on which we restrict our ray transform to make the inversion possible. 
\begin{definition} Let $f$ be a symmetric tensor of order $m$ in $\mathbb{R}^{n-1}$. We say a collection of vectors $\{ v_1, \dots, v_{L(n,m)}\}$ in  $\mathbb{R}^{n-1}$ with $L(n,m)= {n+m-2 \choose m}$ is  generic if $f$ is uniquely determined by $f(v_1), \dots ,f(v_{L(n,m)})$. Here $f(v_i) = \langle f , v_i^m \rangle = f_{i_1\dots i_m}v_i^{i_1}\dots v_i^{i_m}$. 
\end{definition}
We note that the collection $\{v_{i}, 1\leq i\leq L(n,m)\}$ in $\Rb^{n-1}$ is generic if and only if the $m^{\mathrm{th}}$ symmetric product of the vectors $\{v_{i}: 1\leq i\leq L(n,m)\}$ denoted by $\{v_{i}^{\odot{m}},1\leq i\leq L(n,m)\}$  forms a linearly independent set.
\begin{definition}[Kirillov-Tuy condition]\label{K-T Defn} Consider a ball $B$ in $\Rb^{n}$. We say that a smooth curve $\g$ defined on a bounded interval satisfies the Kirillov-Tuy condition of order $m \geq 1$ if for almost all hyperplanes $H$ intersecting the ball $B$, there is a set of points $\g(t_1),\dots ,\g(t_{L(n,m)})$, in the intersection of the hyperplane and the curve $\g$, such that for almost all $x\in H\cap B$,  the collection of vectors $\{(x-\g(t_{i})), 1\leq i\leq L(n,m)\}$ is generic.
\end{definition}
\begin{remark}\label{Examples-remark}We make the following remarks regarding the Kirillov-Tuy condition.
	\begin{enumerate}
	\item	For $n-1=2$ and for any $m\geq 1$, the condition that a collection of  $(m+1)$-vectors $v_{1},\cdots,v_{m+1}$ are generic is equivalent to the condition that these vectors are pairwise independent. We show this in \S \ref{sect:examples}.
\item For the case of $n-1=3$ and $m=2$, the condition that a collection of  vectors $v_{1},\cdots, v_{6}$ is generic is equivalent to the condition that any collection of three vectors is linearly independent
\item 
	The situation for general $n$ and $m$ is quite different. It is possible to find $L(n,m)$ collection of vectors $v_1, \dots, v_{L(n,m)}$ in which any $(n-1)$ vectors are independent but the vectors $v_1^{\odot m}, \cdots , v_{L(n,m)}^{\odot m}$  need not be independent (see \S \ref{sect:examples} again for a counter example). %Here $v_{i}^{\odot m}$ denotes the $m$-symmetric product of $v_{i}$.
	\end{enumerate}
\end{remark}
\begin{example}
	An example of a curve satisfying Kirillov-Tuy condition in $\Rb^{n}$ of order $m$, one can consider 
	$$ \gamma = \cup_{i=1}^{L(n,m)}l_i$$
	where $l_i = \{tv_i:t\in \mathbb{R}\setminus 0 \text{ and } v_i \in \mathbb{R}^n\}$ and $v_i \neq  v_j$ for $i\neq j$. Then almost every hyperplane in $\mathbb{R}^n$ will intersect $\gamma$ at $L(n,m)$ distinct points. And for almost every $x$ in that hyperplane,  $\{(x-\gamma_i)^{\odot m}| \text{ for }1 \leq i \leq L(n,m)\}$ forms a linearly independent set because the determinant of the matrix $\left((x-\gamma_i)^{\odot m}\right)_{i=1}^{L(n,m)}$ is a multinomial in the variable $x$ whose zero set has measure 0 in $\mathbb{R}^n$.
\end{example}
As already mentioned in the introduction, we take a microlocal analysis approach to the incomplete data tensor tomography problem. To this end, we need to put additional restrictions on the curve for our microlocal approach to work.  
	\begin{remark}[Restrictions on the curve $\g$]\label{Restrictions on the curve}
	The conditions we impose on the curve $\g$ are as follows: 
	\begin{enumerate}
		\item The curve $\g: I\to \Rb^{n}$ is smooth, regular and without self-intersections.
				\item There is a uniform bound on the number of intersection points of almost every hyperplane in $\Rb^{n}$ with the curve $\g$, see \cite{LanThesis}.
				\item The curve $\g$ satisfies the Kirillov-Tuy condition; see Definition \ref{K-T Defn}.
	\end{enumerate}
\end{remark}
Let $B$ be the ball that appears in the definition of Kirillov-Tuy condition (Definition \ref{K-T Defn}). The space of symmetric $m$-tensor fields we consider will be denoted by $\Ec'(B)$. These are symmetric $m$-tensor fields in $\Rb^{n}$ whose each component is a distribution supported in the ball $B$.

Due to the microlocal approach we adopt in this paper, we have to place certain restrictions on the wavefront set of the symmetric $m$-tensor field $f\in \Ec'(B)$, since these are the singular directions  that we can potentially recover based on this approach.
 
Following \cite{Greenleaf-Uhlmann-Duke1989, Ramaseshan2004}, we define the following sets:
\begin{alignat}{2}\label{The three sets}
	\notag  & \Xi =\Big{\{}
	(x,\xi)\in T^*B\setminus \{0\}:  \text{ there exists $L(n,m)$ directions from } x \text{ to }
	(x+\xi^\perp) \cap \g \\ & \hspace{1.6in}\text{ and any $(n-1)$ of them are linearly independent}\Big{\}}.\\
	& \Xi_{\Delta} = \Big{\{}
	(x,\xi)\in \Xi :x+\xi^\perp \text{intersects $\g$ transversely}\Big{\}}.\\
\notag 	&\Xi_{\Lambda} = \Big{\{}
	(x,\xi)\in\Xi:x+\xi^\perp \text{ is tangent (only at finite number of points) at (say)} \\
		&\hspace{1.13in} \{\g(t_{1}),\cdots, \g(t_{n})\}
	\text{ and } \langle \g^{\prime \prime}(t_{i}),\xi\rangle\neq 0 \text{ for } i=1,2,\cdots,n \Big{\}}.
\end{alignat}

\begin{theorem}\label{Main theorem}
	Let $\Xi_0 \subseteq \Xi_\Delta$ be such that $\overline{\Xi}_0 \subseteq \Xi_\Delta \cup \Xi_\Lambda$ and $K$ be a closed conic subset of \  $\Xi_0$. Let $\mathcal{E}^\prime_K(B)\subset \Ec'(B)$ denote the space of compactly supported distributions in $B$ whose wavefront set is contained in $K$. Then there exist an operator $\mathcal{B}\in I^{0,1}(\Delta,\Lambda)$ and an operator $A\in I^{-1/2}(\Lambda)$ such that for any symmetric tensor field $f$ with coefficients in $\mathcal{E}^\prime_K(B)$, we have for each $x\in \pi_{1}(K)$ (the projection to the base variable)
	\[
	\mathcal{B}\mathcal{R}_{\g}^{*}\mathcal{R}_{\g}f(x)  = f^s(x) + \mathcal{A}f(x) + \text{smoothing terms.}
	\]
\end{theorem}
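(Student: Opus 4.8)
The strategy is to study the normal operator $\mathcal{R}_\g^* \mathcal{R}_\g$ as a matrix-valued operator acting on the components of the symmetric $m$-tensor field, and show it lies in the class $I^{p,l}(\Delta,\Lambda)$ for the two Lagrangians $\Delta$ (the diagonal, associated with the transversal intersections $\Xi_\Delta$) and $\Lambda$ (a flowout Lagrangian coming from the tangential intersections $\Xi_\Lambda$). First I would set up the geometry: parametrize lines through the curve $\g$ and compute the canonical relation of $\mathcal{R}_\g$; because a point $x$ with covector $\xi$ "sees" the curve along the $L(n,m)$ directions $(x-\g(t_i))$ lying in the hyperplane $x+\xi^\perp$, the composition $\mathcal{R}_\g^*\mathcal{R}_\g$ picks up contributions indexed by these intersection points. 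At a transversal intersection the contribution is pseudodifferential (contributing to $\Delta$); at a point where the hyperplane is tangent to $\g$ with $\langle \g''(t_i),\xi\rangle \neq 0$ the map exhibits a Whitney fold / blow-down type degeneracy, producing the flowout component $\Lambda$. This is exactly the structure analyzed in \cite{Greenleaf-Uhlmann-Duke1989, LanThesis, Ramaseshan2004}; the new ingredient is the tensorial nature, so I would carry the symbol computation with the extra factors $\om_{i_1}\cdots\om_{i_m}$ and $\om^{j_1}\cdots\om^{j_m}$ present in $\mathcal{R}_\g$ and $\mathcal{R}_\g^*$, yielding a matrix symbol whose entries are (weighted) sums over the intersection points $t_i$ of rank-one tensors $(x-\g(t_i))^{\odot m} \otimes (x-\g(t_i))^{\odot m}$.

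Next I would compute the principal symbol of $N:=\mathcal{R}_\g^*\mathcal{R}_\g$ on $\Delta \setminus \Sigma$. Restricted to a covector $(x,\xi)\in \Xi_0$, the symbol is a positive semidefinite matrix $M(x,\xi)=\sum_{i=1}^{L(n,m)} c_i\, (x-\g(t_i))^{\odot m}\,((x-\g(t_i))^{\odot m})^{T}$ with positive weights $c_i$, acting on the space $S^m$ of symmetric $m$-tensors, which has dimension $\binom{n+m-1}{m}$. By the Kirillov–Tuy/genericity hypothesis built into $\Xi$, the vectors $(x-\g(t_i))$ span $\xi^\perp$ generically and their $m$-th symmetric powers $(x-\g(t_i))^{\odot m}$ span the $\binom{n+m-2}{m}=L(n,m)$-dimensional subspace of symmetric $m$-tensors "tangent to $\xi^\perp$" — precisely the space onto which solenoidal tensors project microlocally in the direction $\xi$ (the part of $S^m$ annihilated by contraction with $\xi$). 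Hence $M(x,\xi)$ is an isomorphism on that subspace and vanishes on its complement, which is exactly the potential part. This is the tensor analogue of the scalar ellipticity in \cite{Ramaseshan2004}, and it says $N$ is "elliptic on solenoidal tensors": there is a matrix-valued symbol $b(x,\xi)$ (the Moore–Penrose type inverse of $M$ composed with the solenoidal projection $P^s_\xi$) with $b(x,\xi) M(x,\xi) = P^s_\xi$ on $\Delta\setminus\Sigma$. One then quantizes $b$ together with the $I^{0,1}(\Delta,\Lambda)$ structure (using the calculus of \cite{AntonianoandUhlmann} and the symbol properties listed after Example \ref{example:basic}) to get the operator $\mathcal{B}\in I^{0,1}(\Delta,\Lambda)$.

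Finally I would assemble the parametrix identity. Writing $N \in I^{1,?}(\Delta,\Lambda)$ with the stated orders (the $\Delta$-order being that of the scalar restricted ray transform normal operator, shifted appropriately, and the $\Lambda$-contribution one order lower), the composition theorem gives $\mathcal{B}N \in I^{0,?}(\Delta,\Lambda)$ with principal symbol $b\cdot\sigma_0(N) = P^s_\xi$ on $\Delta\setminus\Sigma$. Since $\mathcal{B}N - P^s$ has vanishing principal symbol on $\Delta\setminus\Sigma$, property (7) of the $I^{p,l}$ calculus places the error in a lower class; iterating (a Neumann/asymptotic summation argument, as in \cite{LanThesis}) and using property (5) absorbs the genuinely smoothing part, leaving $\mathcal{B}\mathcal{R}_\g^*\mathcal{R}_\g f = P^s f + \mathcal{A}f + (\text{smoothing})$, where $P^s f = f^s$ microlocally over $\pi_1(K)$ (the difference $P^s f - f^s$ being smooth there because the potential/solenoidal decomposition is elliptic away from $\xi=0$) and $\mathcal{A}$ is a Fourier integral operator associated to $\Lambda$, of order $-1/2$, i.e. $\mathcal{A}\in I^{-1/2}(\Lambda)$ — the known artifact coming from the tangential intersections encoded in $\overline{\Xi}_0\cap\Xi_\Lambda$. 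The condition $\overline{\Xi}_0 \subseteq \Xi_\Delta\cup\Xi_\Lambda$ guarantees that no other, uncontrolled, singularities of $N$ interfere over the relevant frequency set.

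The main obstacle I anticipate is the symbol/linear-algebra step: proving that the matrix $M(x,\xi)$ is exactly invertible on the solenoidal subspace and identically zero on the potential subspace, uniformly down to $\Sigma$, and matching this with the correct solenoidal projector $P^s_\xi$. This requires showing that genericity of $\{(x-\g(t_i))\}$ in $\xi^\perp\cong\Rb^{n-1}$ (the Kirillov–Tuy hypothesis) is equivalent to linear independence of their symmetric $m$-th powers in $S^m(\Rb^{n-1})$ — a fact the paper isolates precisely because it is the crux — and then tracking how this $(n-1)$-dimensional statement sits inside the $n$-dimensional tensor bundle and interacts with the fold/blow-down degeneracy along $\Xi_\Lambda$, where the symbol must be understood as a conormal distribution (property (6)) rather than a smooth symbol. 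The bookkeeping of the $I^{p,l}$ orders through the matrix composition, and verifying the Antoniano–Uhlmann calculus applies entrywise with the flowout $\Lambda$ genuinely a flowout, is technical but, given the cited results, routine by comparison.
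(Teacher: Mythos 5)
Your overall route coincides with the paper's: parametrize the line complex through $\g$, identify the blow-down/fold structure of the projections and the flowout Lagrangian $\Lambda$, compute the principal symbol of $\Rc_{\g}^{*}\Rc_{\g}$ on $\Delta\setminus\Sigma$ by stationary phase as a positively weighted sum of rank-one terms $\omega(t_q)^{\odot m}\bigl(\omega(t_q)^{\odot m}\bigr)^{t}$, use the Kirillov--Tuy condition to get ellipticity on the solenoidal subspace (which is indeed $S^{m}(\xi^{\perp})$, of dimension $L(n,m)$, and is annihilated on its complement $\xi\odot S^{m-1}$), and then build a parametrix in the $I^{p,l}(\Delta,\Lambda)$ calculus whose limiting error splits into a smoothing part and an artifact in $I^{-1/2}(\Lambda)$. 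Your identification of the symbol-level linear algebra as the crux, and your construction of $B_{0}$ as a Moore--Penrose-type inverse composed with the solenoidal projector, match \S\ref{sect:principal symbol}--\ref{sect:microlocal_inversion}. (Minor slip: the normal operator lies in $I^{-1,0}(\Delta,\Lambda)$, not ``$I^{1,?}$'', though you hedged this.)

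There is, however, one genuine gap in the iteration step. You propose to iterate ``as in \cite{LanThesis}'', i.e.\ a Neumann-type correction scheme; but in the scalar case the symbol is elliptic, whereas here $A_{0}$ has a nontrivial kernel (the potential directions), so having $\sigma_{0}(\Bc_{0}\Rc_{\g}^{*}\Rc_{\g}-\Sc)=0$ on $\Delta\setminus\Sigma$ does not by itself let you construct the next corrector: to cancel the principal symbols of the remainders $R_{11}\in I^{-3/2,1/2}$ and $R_{12}\in I^{-1/2,-1/2}$ you must solve $\sigma_{0}(\Bc_{1j})A_{0}=-\sigma_{0}(R_{1j})$, which is possible only if each $\sigma_{0}(R_{1j})$ lies in the row space of $A_{0}$. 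This is not automatic and is exactly what the paper supplies: since $\Rc_{\g}\circ\Pc=0$ and $\Sc\circ\Pc=0$, one has $R_{1}\circ\Pc=0$, hence (Lemma \ref{Principal symbol remainders}) the principal symbols of $R_{1j}\circ\Pc$ vanish, and the row-space argument (Lemma \ref{row space lemma}) combined with Proposition \ref{Ellipticity Proposition} gives the factorizations $\sigma_{0}(R_{1j})=r_{1j}A_{0}$, which is what makes the recursion run and ultimately produces $\Bc=\Bc_{0}+\Bc_{1}+\cdots\in I^{0,1}(\Delta,\Lambda)$ and $\Ac\in I^{-1/2}(\Lambda)$. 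Without this structural input your iteration stalls after the first step; the gap is repairable, but it is the main new ingredient of the tensor (non-elliptic) setting relative to the scalar references you cite and should be made explicit.
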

\begin{remark}
		This theorem is a generalization of the results in \cite{LanThesis,Lan2003}, which dealt with the case of restricted ray transform of functions in $\Rb^{n}$ and in \cite{Ramaseshan2004} who considered a restricted ray transform of vector fields in $\Rb^{3}$. Our result can also be considered a generalization of the result in \cite{Greenleaf-Uhlmann-Duke1989}, which dealt with the ray transform on complexes in a Riemannian setting. However, we emphasize that ours is a Euclidean result, whereas \cite{Greenleaf-Uhlmann-Duke1989} studies restricted ray transforms in the very general setting of certain Riemannian manifolds. It is an interesting question to extend the current work in the setting of Riemannian manifolds as done in \cite{Greenleaf-Uhlmann-Duke1989}. 
\end{remark}
\par The strategy of proof is to compute the principal symbol of the operator $\Rc_{\g}^{*}\Rc_{\g}$ on the diagonal $\Delta$ away from the set $\Sigma$ and use this principal symbol to construct a parametrix for this operator. Since the kernel of the operator $\Rc_{\g}$ acting on symmetric $m$-tensor fields is non-trivial, we can microlocally only invert the operator $\Rc_{\g}$ to recover the solenoidal component of the tensor field. Since we are also dealing with a restricted ray transform, the inversion procedure  introduces an additional error term (in addition to smoothing terms), but this error term is a Fourier integral operator associated to the known canonical relation $\Lambda$. We begin by proving some microlocal properties of the operators $\Rc_{\g}$ and $\Rc_{\g}^{*}\Rc_{\g}$ in \S \ref{sect:pc}. \S \ref{sect:principal symbol}-\ref{sect:microlocal_inversion}  are devoted to the proof of Theorem \ref{Main theorem}.
\section{Microlocal properties of $\Rc_{\g}$ and $\Rc_{\g}^{*}\Rc_{\g}$}\label{sect:pc} This section will be devoted to some calculations regarding the singularities of the left and right projection maps from the canonical relation associated to the ray transform $\Rc_{\g}$.

We introduce spherical coordinates: 
\begin{align*}
& x_{1}=\cos\vp_{1}\\
&x_{2}=\sin\vp_{1}\cos\vp_{2}\\
&x_{3}=\sin\vp_{1}\sin\vp_{2}\cos\vp_{3}\\
& \vdots\\
&x_{n-1}=\sin\vp_{1}\sin\vp_{2}\cdots\sin\vp_{n-2}\cos\vp_{n-1}\\
&x_{n}=\sin\vp_{1}\sin\vp_{2}\cdots\sin\vp_{n-2}\sin\vp_{n-1},
\end{align*}
where $0\leq \vp_{i}\leq \pi$ for $1\leq i\leq n-2$ and $0\leq \vp_{n-1}\leq 2\pi$. For simplicity, from now on, we will denote $\vp=(\vp_{1},\cdots,\vp_{n-1})$ and the spherical coordinates defined by $\vp$ by $\vec{\vp}=(\cos \vp_{1},\cdots,\sin\vp_{1}\sin\vp_{2}\cdots\sin\vp_{n-2}\sin\vp_{n-1})$. Let us denote by $\Cc$, the line complex consisting of the  collection of all lines  passing through the curve $\g$. Any line in $\Cc$ can be described by $t,\vp$ and any point on the line parametrized by $(t,\vp)$ is of the form $\g(t)+ s \vec{\vp}$ for some $s\in \Rb$.

Let $l$ denote a line in our line complex $\Cc$ and let 
\[
Z=\{(l,x): x\in l\} \subset \Cc\times \Rb^{n}\]
be the point-line relation. We have that $(t,\vp,s)$ is a local parametrization of the set $Z$. 

We now compute the conormal bundle $N^{*}Z$ of $Z$. The tangent space $TZ$ at an arbitrary point in $Z$ determined by $(t,\vp,s)$ is spanned by the vectors: 
\[
\begin{pmatrix}
1\\\vec{0}\\\g'(t)
\end{pmatrix},
\begin{pmatrix}
0\\e_{1}\\s\PD_{\vp_{1}}\vec{\vp}
\end{pmatrix},
\cdots,\begin{pmatrix}
0\\e_{n-1}\\ s\PD_{\vp_{n-1}}\vec{\vp}
\end{pmatrix},
\begin{pmatrix}
0\\\vec{0}\\\vec{\vp}
\end{pmatrix}.
\]
Now let $(\Gamma, \xi)$ be an element of $N^{*}Z$ at an arbitrary point $(t,\vp,s)$ with $\vp_{1},\cdots,\vp_{n-2}\neq 0,\pi$. We must have 
\[
\begin{pmatrix} \Gamma\\
\xi\end{pmatrix}\cdot \begin{pmatrix}
 0\\0\\\vec{\vp}
\end{pmatrix}= \begin{pmatrix} \Gamma\\\xi\end{pmatrix}\cdot \begin{pmatrix}
1\\\vec{0}\\\g'(t)
\end{pmatrix}= \begin{pmatrix} \Gamma\\\xi\end{pmatrix}\cdot\begin{pmatrix}
0\\e_{1}\\s \partial_{\vp_{1}}
\end{pmatrix}\cdots =\begin{pmatrix} \Gamma\\\xi\end{pmatrix}\cdot\begin{pmatrix}
0\\e_{n-1}\\s \partial_{\vp_{n-1}} \vec{\vp}
\end{pmatrix}=0.
\]
From these, the fact that $\xi\cdot \vec{\vp}=0$ implies that $\xi$ belongs to the subspace generated by the unit vectors
\begin{equation}
\label{vectors vis}
 v_{1}:=\partial_{\vp_{1}}\vec{\vp},v_{2}:=\frac{1}{\sin \vp_{1}} \partial_{\vp_{2}}\vec{\vp},\cdots, v_{n-1}:=\frac{1}{\sin \vp_{1}\cdots\sin\vp_{n-2}}\partial_{\vp_{n-1}}\vec{\vp}.
\end{equation} That is $\xi$ can be written as 
\Beq\label{The vector xi}  
\xi = \sum\limits_{i=1}^{n-1}z_{i}v_{i} \mbox{ for some } z_{1},\cdots,z_{n-1}\in \Rb.
\Eeq From the remaining equations, we get, 
\Beq\label{Gamma}
\Gamma=
\begin{pmatrix}
\Gamma_{1}\\
\Gamma_{2}\\
\Gamma_{3}\\
\vdots\\
\Gamma_{n}
\end{pmatrix}=\begin{pmatrix}
-\xi \cdot \g'(t)\\ -s z_{1}\\ -s z_{2} \sin \vp_{1}\\
\vdots\\
-s z_{n-1}\sin \vp_{1}\cdots \sin\vp_{n-1}
\end{pmatrix}
\Eeq
\begin{lemma} The  map 
\[
\Phi: (t, \vp_{1},\cdots,\vp_{n-1},s,z_{1},\cdots,z_{n-1})\to (t, \vp_{1},\cdots,\vp_{n-1},\Gamma; x,\xi)
\]
with 
\begin{equation}\label{The vector Gamma}
  \Gamma=\begin{pmatrix}
-\xi \cdot \g'(t)\\ -s z_{1}\\ -s z_{2} \sin \vp_{1}\\
\vdots\\
\notag -s z_{n-1}\sin \vp_{1}\cdots \sin\vp_{n-1}
\end{pmatrix}
\end{equation}
\begin{align}
 &\notag  x=\g(t)+ s \vec{\vp}, \quad \mbox{ and } \quad \xi \mbox{ as in } \eqref{The vector xi}
%\xi = \sum\limits_{i=1}^{n-1}z_{i}v_{i}
%\xi =z_{1}\partial_{\vp_{1}}\vec{\vp}+ z_{2}\frac{1}{\sin \vp_{1}} \partial_{\vp_{2}}\vec{\vp}+\cdots+z_{n-1}\frac{1}{\sin \vp_{1}\cdots\sin\vp_{n-2}}\partial_{\vp_{n-1}}\vec{\vp} 
\end{align}
gives a local parametrization of $N^{*}Z$ at the points where $\vp_{1},\cdots,\vp_{n-2}\neq 0,\pi$.
\end{lemma}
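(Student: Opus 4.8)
The plan is to show directly that $\Phi$ is a diffeomorphism from the open set $\{(t,\vp_{1},\dots,\vp_{n-1},s,z_{1},\dots,z_{n-1}):\vp_{1},\dots,\vp_{n-2}\neq 0,\pi\}$ onto an open subset of $N^{*}Z$. First, a dimension count: since $\Cc$ is $n$-dimensional (with local coordinates $t,\vp_{1},\dots,\vp_{n-1}$) and $Z$ has codimension $n-1$ in $\Cc\times\Rb^{n}$, the conormal bundle $N^{*}Z$ is a smooth manifold of dimension $2n$, which equals the number $1+(n-1)+1+(n-1)$ of parameters of $\Phi$. Hence it suffices to check that $\Phi$ is smooth, takes values in $N^{*}Z$, is injective, and has a smooth inverse on its image.

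That $\Phi$ takes values in $N^{*}Z$ is exactly the computation carried out before the statement: pairing a covector $(\Gamma;\xi)$ based at $(t,\vp;x)=(t,\vp;\g(t)+s\vec\vp)$ against the displayed spanning set of $T_{(t,\vp,s)}Z$ produces the three families of relations $\xi\cdot\vec\vp=0$, $\Gamma_{1}=-\xi\cdot\g'(t)$, and $\Gamma_{i+1}=-s\,\xi\cdot\PD_{\vp_{i}}\vec\vp$ for $1\le i\le n-1$, and these are precisely the equations cutting out the fiber of $N^{*}Z$ over that base point.

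The one algebraic fact that genuinely uses the restriction $\vp_{1},\dots,\vp_{n-2}\neq 0,\pi$ is that $\{\vec\vp,v_{1},\dots,v_{n-1}\}$, with $v_{i}$ as in \eqref{vectors vis}, is the standard orthonormal moving frame of spherical coordinates: the hypothesis makes the normalizing factors $1/(\sin\vp_{1}\cdots\sin\vp_{i-1})$ finite and nonzero, and keeps the vectors $\PD_{\vp_{i}}\vec\vp$ from degenerating. I would verify this claim — orthonormality of the frame together with the identity $\PD_{\vp_{i}}\vec\vp=(\sin\vp_{1}\cdots\sin\vp_{i-1})\,v_{i}$ — by a short induction on the spherical-coordinate formulas. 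Granting it, the constraint $\xi\cdot\vec\vp=0$ is equivalent to $\xi=\sum_{i}z_{i}v_{i}$ for uniquely determined $z_{i}=\xi\cdot v_{i}$, which is the form \eqref{The vector xi}; and substituting $\PD_{\vp_{i}}\vec\vp=(\sin\vp_{1}\cdots\sin\vp_{i-1})v_{i}$ and $\xi\cdot v_{i}=z_{i}$ into $\Gamma_{i+1}=-s\,\xi\cdot\PD_{\vp_{i}}\vec\vp$ reproduces exactly the formula \eqref{The vector Gamma} for $\Gamma$. Thus every element of $N^{*}Z$ lying over a point with $\vp_{1},\dots,\vp_{n-2}\neq 0,\pi$ equals $\Phi$ evaluated at a (unique) parameter tuple, so $\Phi$ is onto this open piece of $N^{*}Z$.

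For injectivity and smoothness of the inverse: from a point $(t,\vp_{1},\dots,\vp_{n-1},\Gamma;x,\xi)$ in the image one reads off $(t,\vp)$ immediately, then recovers $s=(x-\g(t))\cdot\vec\vp$ from $x=\g(t)+s\vec\vp$ and $|\vec\vp|=1$, and finally $z_{i}=\xi\cdot v_{i}$; each of these depends smoothly on the data, so $\Phi^{-1}$ is smooth and $\Phi$ is the claimed local parametrization. I do not anticipate any real obstacle here; the only point demanding care is the spherical-frame computation above, which is routine but slightly tedious and which is precisely what breaks down when some $\vp_{i}$ with $i\le n-2$ equals $0$ or $\pi$.
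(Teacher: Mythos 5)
Your argument is correct, but it proceeds by a different route than the paper. The paper's proof is a rank computation: it writes out the Jacobian $\Phi_{*}$ as a $4n\times 2n$ block matrix in the ambient coordinates $(t,\vp,\Gamma;x,\xi)$, observes that the blocks $\mathrm{Id}_{n\times n}$, $\vec{\vp}$ and $v_{1},\dots,v_{n-1}$ force the rank to be $2n$, and concludes that $\Phi$ is an immersion and hence a local parametrization near points with $\vp_{1},\dots,\vp_{n-2}\neq 0,\pi$ (injectivity being implicit in the explicit formulas). You instead build the inverse by hand: you match dimensions ($\dim N^{*}Z=2n$), check that the defining relations of the conormal fiber are exactly $\xi\cdot\vec\vp=0$, $\Gamma_{1}=-\xi\cdot\g'(t)$, $\Gamma_{i+1}=-s\,\xi\cdot\PD_{\vp_{i}}\vec\vp$, and then use orthonormality of the spherical frame $\{\vec\vp,v_{1},\dots,v_{n-1}\}$ to recover $z_{i}=\xi\cdot v_{i}$ and $s=(x-\g(t))\cdot\vec\vp$ smoothly, giving a bijection with smooth inverse onto the part of $N^{*}Z$ over the coordinate patch. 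Your version is slightly stronger and more self-contained (it yields injectivity and an explicit smooth inverse, rather than only local invertibility from the immersion criterion), at the cost of the frame computation you defer; that orthonormality fact is standard and is in any case already used implicitly by the paper when it declares the $v_{i}$ unit vectors and writes $\xi=\sum_{i}z_{i}v_{i}$, so deferring it is not a gap. The paper's approach is shorter and its Jacobian bookkeeping feeds naturally into the subsequent analysis of the projections $\pi_{L}$ and $\pi_{R}$, which is why the authors chose it.
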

\bpr
We have that the pushforward $\Phi_{*}$ of $\Phi$ is given by 

\[
\Phi_{*}=
\begin{pmatrix}
&\mbox{Id}_{n\times n} & & & 0_{n\times n}\\
& \star_{n\times n} & & &\star_{n\times n}\\
&\star_{n\times n} & \vec{\vp} & &0_{n\times n-1}\\
&\star_{n\times n} & \vec{0} & v_1 & \cdots & v_{n-1}
\end{pmatrix}.
\]
Here $\vec{\vp}$ is denoted as a column vector. 
The rank of this matrix is $2n$ and hence $\Phi_{*}$ is an immersion. The map $\Phi$ is a local parametrization near points where $\vp_{1},\cdots,\vp_{n-2}\neq 0,\pi$. 
\epr
\begin{proposition}
	The operator $\Rc_{\g}$ is a Fourier integral operator of order $-1/2$ with the associated canonical relation $C$ given by $(N^{*}Z)'$ where $Z=\{(x,\g): x\in \g\}$.
	The left and the right projections $\pi_{L}$ and $\pi_{R}$ from $C$ drop rank simply by $1$ on the  set 
	\Beq\label{The set Sigma}\Sigma:=
	\{(t,\vp_{1},\cdots, \vp_{n-1}, s, z_{1},\cdots,z_{n-1}): \g'(t)\cdot \xi=0\},
	\Eeq where $\xi$ is given by \eqref{The vector xi}. The left projection $\pi_{L}$ has a blowdown singularity along $\Sigma$ and the right projection $\pi_{R}$ has a fold singularity along $\Sigma$. 
\end{proposition}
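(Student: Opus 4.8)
The plan is to analyze the two projection maps $\pi_L, \pi_R : C \to T^*\Rb^n$ directly from the explicit parametrization $\Phi$ of $N^*Z$ obtained in the preceding Lemma, and to read off their singularity structure from the Jacobian. First I would record the orders and the FIO statement: since $Z$ has codimension $n-1$ in $\Cc \times \Rb^n$ and the ray transform $\Rc_\g$ is (fiberwise in the tensor index) a restriction-type generalized Radon transform with $\dim \Cc = n$, the standard count of Sharafutdinov/Guillemin gives that $\Rc_\g$ is an FIO of order $-1/2$ associated with $C = (N^*Z)'$; this part is essentially bookkeeping once one knows $C$ is a local canonical graph away from $\Sigma$. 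The substantive part is the singularity analysis.

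Next I would compute the differentials $d\pi_R$ and $d\pi_L$ in the coordinates $(t,\vp,s,z)$ supplied by $\Phi$. For the right projection $\pi_R : (t,\vp,s,z) \mapsto (x,\xi) = (\g(t)+s\vec\vp,\ \sum_i z_i v_i)$: the $x$-block of the Jacobian involves $\g'(t), \vec\vp$, and $s\,\partial_{\vp_j}\vec\vp$, while the $\xi$-block only involves the $v_i$'s and their $\vp$-derivatives, and crucially does not see $t$ except through the $z$-dependence. The determinant of $d\pi_R$ vanishes exactly when the vector $\g'(t)$ becomes linearly dependent with the tangent directions to the line-direction sphere, i.e. exactly on the set where $\g'(t)\cdot\xi = 0$ — this is the definition of $\Sigma$ in \eqref{The set Sigma}. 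Computing $\partial_t(\det d\pi_R)$ I expect it to be proportional to $\langle \g''(t),\xi\rangle$ (the same quantity appearing in the definition of $\Xi_\Lambda$), which is nonzero by the transversality/tangency hypotheses; hence $\pi_R$ drops rank simply by one on $\Sigma$. Then I must identify $\ker d\pi_R$ on $\Sigma$: it is spanned by a single vector, and because the degeneracy comes from the $t$-direction interacting with the $x$-equation $x=\g(t)+s\vec\vp$, the kernel vector has a nonzero $\partial_t$-component; since $\Sigma = \{\g'(t)\cdot\xi=0\}$ is cut out by a function whose differential is (up to the nonvanishing factor) $d(\g'\cdot\xi)$, and $\partial_t(\g'\cdot\xi) = \g''\cdot\xi \neq 0$, the kernel is transverse to $T\Sigma$ — this is precisely the Whitney fold condition for $\pi_R$.

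For the left projection $\pi_L : (t,\vp,s,z) \mapsto (\Gamma; \text{curve-side point})$ with $\Gamma$ given by \eqref{The vector Gamma}, the first component of $\Gamma$ is $-\xi\cdot\g'(t)$, which vanishes identically on $\Sigma$, so along $\Sigma$ the relevant constraint that forces the rank drop is again $\g'\cdot\xi = 0$; I would show $d\pi_L$ also drops rank simply by one there, by the same computation of $\partial_t \det$. The difference is in the kernel: on $\Sigma$ the vanishing of $\Gamma_1 = -\xi\cdot\g'(t)$ together with the structure of the other $\Gamma_i = -sz_i\prod\sin\vp_j$ means the kernel direction of $d\pi_L$ lies along the level set $\{\g'(t)\cdot\xi = \text{const}\}$ restricted appropriately — i.e. $\ker d\pi_L \subset T\Sigma$, giving a blowdown. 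Concretely I would exhibit the null vector explicitly: perturbing in a direction that keeps $\g(t)+s\vec\vp$ fixed is impossible on the curve side, so the kernel must be the direction tangent to $\Sigma$ that kills the $\Gamma$-differential, and one checks this vector annihilates $d(\g'\cdot\xi)$, i.e. lies in $T\Sigma$.

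I expect the main obstacle to be the explicit kernel computation and the sign/factor tracking needed to prove the $\ker \subset T\Sigma$ (blowdown) versus $\ker \not\subset T\Sigma$ (fold) dichotomy: the matrices are $2n\times 2n$ with blocks of $\star$'s whose exact entries (the $s\,\partial_{\vp_j}\vec\vp$ terms and their interaction with the orthonormal frame $v_1,\dots,v_{n-1}$ from \eqref{vectors vis}) must be controlled carefully, and one needs to verify the rank is exactly $2n-1$ on $\Sigma$ (not lower) — this is where the genericity/linear-independence hypotheses on the directions from $x$ to $\g$ (built into $\Xi$) and the condition $\langle\g''(t_i),\xi\rangle\neq 0$ enter to guarantee the degeneracy is as mild as claimed. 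Away from $\Sigma$ one also checks $C$ is a local canonical graph, which follows once $d\pi_R$ (say) is invertible there, completing the FIO-of-order-$(-1/2)$ claim.
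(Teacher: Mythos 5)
Your overall strategy coincides with the paper's: work in the coordinates $(t,\vp,s,z)$ furnished by the parametrization $\Phi$, detect $\Sigma$ through the vanishing of a Jacobian determinant (the paper computes $\det(\pi_L)_{*}=(-1)^{n-1}s^{n-2}\sin^{n-2}\vp_{1}\cdots\sin\vp_{n-2}\,\g'(t)\cdot\xi$, so the degeneracy locus is exactly \eqref{The set Sigma}), exhibit the kernels of $(\pi_{L})_{*}$ and $(\pi_{R})_{*}$ on $\Sigma$, decide blowdown versus fold by pairing the kernel with the conormal of $\Sigma$, and quote the general theory of generalized Radon transforms for the order $-1/2$. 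So the route is the same; the problem is that the decisive step of the fold/blowdown dichotomy is not actually established by the reasoning you give.

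For the fold, you argue that because the kernel vector of $(\pi_{R})_{*}$ has nonvanishing $t$-component and $\partial_{t}(\g'(t)\cdot\xi)=\g''(t)\cdot\xi\neq 0$, the kernel must be transverse to $T\Sigma$. As stated this is a non sequitur: the defining function $\g'(t)\cdot\xi=\sum_{i}z_{i}\,\g'(t)\cdot v_{i}(\vp)$ also has nonzero $\vp$- and fiber-derivatives, and the kernel vector has nonzero entries in exactly those slots, so a priori the $t$-contribution could be cancelled. The paper's proof closes precisely this point: writing the kernel vector as $(\A_{0},\ldots,\A_{n-1},\B_{0},\ldots,\B_{n-1})$ with $\A_{0}=-s\neq 0$, the remaining kernel equations yield the identity \eqref{fold equality}, which says that the $\vp$- and $\B$-contributions to the pairing with $d(\g'(t)\cdot\xi)$ sum to zero, leaving $\A_{0}\,\g''(t)\cdot\xi\neq 0$; only then does the Whitney fold condition follow. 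Symmetrically, your blowdown argument ("perturbing in a direction that keeps $\g(t)+s\vec{\vp}$ fixed is impossible on the curve side, so the kernel must be the direction tangent to $\Sigma$ ...") essentially assumes what is to be proved, and the heuristic is off: the paper exhibits the kernel of $(\pi_{L})_{*}$ explicitly as the scaling direction $(0,\ldots,0,s,-z_{1},\ldots,-z_{n-1})$ in the $(s,z)$-variables (which does move the point $x=\g(t)+s\vec{\vp}$), and then checks that its pairing with $d(\g'(t)\cdot\xi)$ equals $-\g'(t)\cdot\xi$, which vanishes on $\Sigma$, so the kernel lies in $T\Sigma$. You flag these computations as "the main obstacle", but they are the substance of the proposition, so the proposal as written has a genuine gap there; the remaining ingredients (determinant computation, identification of $\Sigma$, the role of $\g''(t)\cdot\xi\neq 0$ in making $\Sigma$ a codimension-one submanifold and the rank drop simple, and the order count for $\Rc_{\g}$) match the paper.
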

\bpr 
We consider the map $\pi_{L}$. In terms of the coordinates $(t,\vp, s,z)$ this is given by 
\[
(t,\vp_{1},\cdots,\vp_{n-1},s,z_{1},\cdots,z_{n-1})\to (t,\vp_{1},\cdots,\vp_{n-1},\Gamma),
\]
where $\Gamma$ is given above \eqref{Gamma}. We recall the vectors $v_{1},\cdots,v_{n-1}$ defined in \eqref{vectors vis}.
We have 
\[
(\pi_{L})_{*}=
\footnotesize
\begin{pmatrix}
\mbox{Id}_{n\times n} & 0_{n\times n}\\
* & \begin{pmatrix}
0 & -\g'(t)\cdot v_{1} & \cdots &\cdots & -\g'(t)\cdot v_{n-1}\\
-z_1 & -s & 0 & \cdots & 0\\
-z_2\sin \vp_1 & 0 & -s\sin\vp_1 &\cdots & 0\\
\vdots & \vdots & \vdots & \ddots & \vdots \\
-z_{n-1}\sin\vp_{1}\cdots\sin\vp_{n-2} &0 & 0 & \cdots & -s \sin\vp_{1}\cdots\sin\vp_{n-2}
\end{pmatrix}
\end{pmatrix}.
\]
The determinant of this matrix is 
\[
\det((\pi_{L})_{*})= (-1)^{n-1}s^{n-2}\sin^{n-2}\vp_{1}\sin^{n-3}\vp_{2}\cdots \sin\vp_{n-2}\g'(t)\cdot \xi.
\]
Since $s\neq 0$ and $\vp_{1},\cdots,\vp_{n-2}\neq 0,\pi$, we have that the determinant vanishes if and only if $\g'(t)\cdot \xi=0$. This is the set $\Sigma$ in \eqref{The set Sigma}. Now based on Remark \ref{Restrictions on the curve} and \eqref{The three sets} (note that these are open conditions), we need to consider only those planes $x+\xi^{\perp}$ and points on those planes such that $\g''(t)\cdot \xi \neq 0$.. 
Therefore we have that the set $\Sigma$ is an embedded submanifold of codimension $1$ in $C$. Let us show that $\pi_{L}$ has a blowdown singularity along $\Sigma$. To see this note that kernel of $(\pi_{L})_{*}$ is spanned by the vector $(0,0\cdots,0,s,-z_1,\cdots,-z_{n-1})$. Since this vector is normal to the gradient vector associated to the defining function of $\Sigma$, we have that the left projection $\pi_{L}$ has a blowdown singularity along $\Sigma$.

Now consider the canonical right projection $\pi_{R}$:  
\[
\pi_{R}: (t,\vp_{1}, \cdots, \vp_{n-1}, s, z_{1},\cdots,z_{n-1})\to (\g(t)+s\vec{\vp},\xi),
\]
where $\xi$ is as in \eqref{The vector xi}. We have 
\[
(\pi_{R})_{*}=
\begin{pmatrix}
\g'(t) & s\frac{\partial \vec{\vp}}{\PD \vp_{1}} & \cdots & s\frac{\PD \vec{\vp}}{\PD \vp_{n-1}} & \vec{\vp} & 0&\cdots &0\\
0 & \sum\limits_{i=1}^{n-1} z_{i} \frac{\PD v_{i}}{\PD \vp_{1}} &\cdots & \sum\limits_{i=1}^{n-1} z_{i} \frac{\PD v_{i}}{\PD \vp_{n-1}} &0& v_{1}& \cdots &v_{n-1}
\end{pmatrix}.
\]
Let a vector $(\A_{0},\cdots,\A_{n-1},\B_{0},\cdots,\B_{n-1})$ be in the kernel of $(\pi_{R})_{*}$. Then we must have 
\[
\begin{pmatrix}
s\frac{\partial \vec{\vp}}{\PD \vp_{1}} & \cdots & s\frac{\PD \vec{\vp}}{\PD \vp_{n-1}} & \vec{\vp}
\end{pmatrix}
\begin{pmatrix}
\A_{1}\\
\vdots \\
\A_{n-1}\\
\B_{0}
\end{pmatrix}=-\A_{0}\begin{pmatrix}
\g_{1}'(t)\\
\vdots\\
\g_{n}'(t)
\end{pmatrix}.
\]
Choosing $A_{0}=-s \neq 0$, we have that 
\[
\begin{pmatrix}
\A_{1}\\
\vdots \\
\A_{n-1}\\
\end{pmatrix}=
\begin{pmatrix}
v_{1}\cdot \g'(t)\\
\frac{1}{\sin \vp_{1}} v_{2}\cdot \g'(t)\\
\vdots\\
\frac{1}{\sin \vp_{1}\cdots \sin \vp_{n-2}}v_{n-1}\cdot \g'(t)
\end{pmatrix}.
\]
We also must have 
\[
\sum\limits_{i=1}^{n-1} \B_{i} v_{i}=-\sum\limits_{j=1}^{n-1} \sum\limits_{i=1}^{n-1} z_{i} \frac{\PD v_{i}}{\PD \vp_{j}}\A_{j}.
\]
Hence
\Beq\label{fold equality}
\lb \sum\limits_{i=1}^{n-1} \B_{i} v_{i}\rb\cdot \g'(t)=-\lb \sum\limits_{j=1}^{n-1} \sum\limits_{i=1}^{n-1} z_{i} \frac{\PD v_{i}}{\PD \vp_{j}}\A_{j}\rb \cdot \g'(t).
\Eeq
Now let us consider the dot product: 
\[
\begin{pmatrix}
\A_{0}\\
\A_{1}\\
\vdots\\
\A_{n-1}\\
\B_{0}\\
\B_{1}\\
\vdots\\
\B_{n-1} 
\end{pmatrix}
\mbox{ and } 
\begin{pmatrix}
\g''(t)\cdot \xi\\
\g'(t)\cdot \sum\limits_{i=1}^{n-1} z_{i} \frac{\PD v_{i}}{\PD \vp_{1}}\\
\vdots\\
\g'(t) \cdot \sum\limits_{i=1}^{n-1} z_{i} \frac{\PD v_{i}}{\PD \vp_{n-1}}\\
0\\
\g'(t) \cdot v_{1}\\
\vdots \\
\g'(t)\cdot v_{n-1}
\end{pmatrix}.
\]
Note that the second vector is normal to the tangent space of the submanifold $\Sigma$ defined by $\{\g'(t) \cdot \xi=0\}$.

We have from \eqref{fold equality} that 
\[
\sum\limits_{j=1}^{n-1} \A_{j} \lb \g'(t)\cdot \sum\limits_{i=1}^{n-1} z_{i} \frac{\PD v_{i}}{\PD \vp_{j}}\rb + \g'(t) \cdot \lb \sum\limits_{i=1}^{n-1} \B_{i}v_{i}\rb=0.
\]
Recall that we chose ${\A_{0}=-s}\neq 0$. Hence we have that the right projection $\pi_{R}$ has a fold singularity along $\Sigma$. Finally, the fact that $\Rc_{\g}$ is a Fourier integral operator of order $-1/2$ follows from the general theory of generalized Radon transforms \cite{Guillemin,Guillemin-Sternberg_Book}.
\epr
\begin{lemma}
The wavefront set of the Schwartz kernel of $\Rc_{\g}^{*}\Rc_{\g}$ satisfies the following: 
\[
WF(\Rc_{\g}^{*}\Rc_{\g})\subset \Delta \cup \Lambda,
\]
where $\Delta$ is the diagonal Lagrangian 
\[
\Delta =\left\{(x,\xi; x,\xi): x= \g(t) + s\theta, \xi\in \theta^{\perp}\setminus \{0\}\right\}
\]
and 
\begin{align*}
	\Lambda =\left\{(x,\xi,y,\frac{\tau}{\tilde \tau}\xi):x=\g(t)+\tau\theta,y=\g(t)+\tilde\tau\theta,\xi \in \theta^\perp\setminus \{0\}, \g'(t)\cdot\xi=0, \tau\neq 0,\tau' \neq 0 \right\}.
\end{align*}
\end{lemma}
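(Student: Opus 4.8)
The plan is to read the lemma off the composition calculus for Fourier integral operators, making essential use of the fold/blowdown structure of $C$ established in the preceding proposition. By that proposition $\Rc_{\g}\in I^{-1/2}(C)$ with $C=(N^{*}Z)'$, so its formal $L^{2}$ adjoint $\Rc_{\g}^{*}$ is a Fourier integral operator whose canonical relation is the transpose $C^{t}$. The tensorial character of $f$ enters only through the matrix-valued symbols, not through the canonical relation, so for the purpose of locating wavefront sets we may argue as if $\Rc_{\g}$ were scalar. Both projections of $C$ avoid the zero sections, and since $\g$ is defined on a bounded interval its image is compact, so the set of lines of $\Cc$ meeting $B$ is relatively compact and $\Rc_{\g}^{*}\Rc_{\g}$ is a well-defined operator whose Schwartz kernel is a compactly supported distribution; hence the standard wavefront-set estimate for compositions applies and
\[
WF(\Rc_{\g}^{*}\Rc_{\g})\ \subseteq\ C^{t}\circ C .
\]
It then remains only to check that $C^{t}\circ C\subseteq\Delta\cup\Lambda$.

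I would compute $C^{t}\circ C$ directly from the parametrization $\Phi$ of $N^{*}Z$. A covector $(x,\xi;y,\eta)$ lies in $C^{t}\circ C$ exactly when there is a single element $((t,\vp),\Gamma)\in T^{*}\Cc$ appearing as a $T^{*}\Cc$-endpoint both of a point of $C$ with other endpoint $(x,\xi)$ and of a point of $C$ with other endpoint $(y,\eta)$ (sign conventions in the transpose and in $(N^{*}Z)'$ do not affect the set below). Unwinding $\Phi$, this forces $x$ and $y$ to lie on a common line $\g(t)+\Rb\theta$ of the complex $\Cc$, with $\theta=\vec{\vp}$, forces $\xi,\eta\in\theta^{\perp}\setminus\{0\}$, and forces the two vectors $\Gamma$ produced by \eqref{The vector Gamma} to coincide. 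Writing $x=\g(t)+\tau\theta$ and $y=\g(t)+\tilde\tau\theta$, the entries of $\Gamma$ along the $\PD_{\vp_{j}}\vec{\vp}$ give $\tau z_{j}^{(x)}=\tilde\tau z_{j}^{(y)}$ for all $j$, hence $\eta=\tfrac{\tau}{\tilde\tau}\,\xi$ by \eqref{The vector xi}; substituting this into the first entry of $\Gamma$ yields $\xi\cdot\g'(t)=\tfrac{\tau}{\tilde\tau}\,\xi\cdot\g'(t)$, i.e.\ $\bigl(1-\tfrac{\tau}{\tilde\tau}\bigr)\bigl(\xi\cdot\g'(t)\bigr)=0$. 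Therefore either $\tau=\tilde\tau$, which forces $x=y$, $\eta=\xi$ and puts the covector in $\Delta$, or $\g'(t)\cdot\xi=0$, which puts it in $\Lambda$. This is precisely the description in the statement, so $WF(\Rc_{\g}^{*}\Rc_{\g})\subseteq\Delta\cup\Lambda$.

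Two points deserve care, and they indicate where the real content lies. First, because the curve may meet a line of $\Cc$ in several points, $(t,\vp)$ is only a local chart on $\Cc$; the wavefront statement is nevertheless local, and the uniform bound on the number of intersections (Remark~\ref{Restrictions on the curve}) keeps the number of local branches finite, so the computation is organized branch by branch. Second, and this is the main obstacle, the right projection $\pi_{R}$ is not a local diffeomorphism — it has a fold along $\Sigma$ — so the Bolker condition fails and $\Rc_{\g}^{*}\Rc_{\g}$ is \emph{not} a pseudodifferential operator. The additional Lagrangian $\Lambda$, the flowout of $\Sigma$, is exactly the trace of that failure, and both its appearance and the fact that $(\Delta,\Lambda)$ is a cleanly intersecting pair — so that $\Rc_{\g}^{*}\Rc_{\g}$ lies in a class $I^{p,l}(\Delta,\Lambda)$ — rest on the fold/blowdown geometry recorded in the preceding proposition, in the spirit of Greenleaf--Uhlmann \cite{Greenleaf-Uhlmann-Duke1989}. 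For the present lemma only the set-theoretic inclusion is needed; the $I^{p,l}(\Delta,\Lambda)$ membership of $\Rc_{\g}^{*}\Rc_{\g}$, and the computation of its order and principal symbol, are taken up in the subsequent sections.
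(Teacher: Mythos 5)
Your proof is correct and follows essentially the same route as the paper: both apply the wavefront-set bound for the composition (H\"ormander--Sato) and then equate the two $\Gamma$-components in the parametrization of $N^{*}Z$, deducing $\eta=\frac{\tau}{\tilde\tau}\xi$ and the dichotomy $\tau=\tilde\tau$ (contribution in $\Delta$) versus $\g'(t)\cdot\xi=0$ (contribution in $\Lambda$). The only difference is that the paper's proof also verifies, beyond the stated inclusion, that $\Delta$ and $\Lambda$ intersect cleanly in codimension one, a point you defer to the later $I^{p,l}$ discussion.
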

\bpr By the H\"ormander-Sato Lemma, we have that 
\begin{align*}
WF(\Rc_{\g}^{*}\Rc_{\g})\subset \{&(x,\xi,y,\eta): (x,\xi, t,\vp,\Gamma)\in WF(\Rc_{\g}^{*}) \mbox{ and } (\wt{t},\wt{\vp},\wt{\Gamma},y,\eta)\in WF(\Rc_{\g})\\
&\mbox{ and } \wt{t}=t, \wt{\vp}=\vp, \wt{\Gamma}=\Gamma \}.
\end{align*}
Now since $\Gamma=\wt{\Gamma}$, we have
\begin{align*}
& \g'(t)\cdot \xi= \g'(t)\cdot \eta\\
& sz_{i}=\wt{s}\wt{z_{i}} \mbox{ for all } 1\leq i\leq n-1.
\end{align*}
Since $s$ and $\wt{s}$ are non-zero, we have 
\[
\wt{z_{i}}=\frac{s}{\wt{s}} z_{i} \mbox{ for all } 1\leq i\leq n-1.
\]
Now 
\[
\eta = \sum\limits_{i=1}^{n-1} \wt{z_{i}} v_{i}=\frac{s}{\wt{s}}\xi.
\]
Hence 
\[
\g'(t) \cdot \xi= \frac{s}{\wt{s}}\g'(t)\cdot \xi.
\]
If $s\neq \wt{s}$, we have that $\g'(t)\cdot \xi=0$. On the other hand if $s=\wt{s}$, then the wavefront set then there is a contribution in $\Delta$ and if $\tau \neq \wt{\tau}$, there is a contribution in $\Lambda$.

Now we have that  $\Delta$ and $\Lambda$ intersects cleanly. For,
\[
\Delta \cap \Lambda =\{(\g(t)+s\vp,\xi; \g(t)+s\vp,\xi): \vp\in \Sb^{n-1}, s>0, \xi \in \vp^{\perp}\setminus\{0\}, \g'(t)\cdot \xi=0\}.
\]
Since $\g''(t)\cdot \xi\neq 0$, we have that $\Delta\cap \Lambda$ is an embedded submanifold of codimension $1$ in either $\Delta$ or $\Lambda$, and the intersection is clean.
\epr
\begin{lemma} \cite{LanThesis} The Lagrangian $\Lambda$ arises as a flowout from the set $\pi_{R}(\Sigma)$.
\end{lemma}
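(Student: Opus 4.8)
The plan is to unwind the notion of ``flowout'' in the case at hand. Recall that if $\G_0\subset T^*\Rb^n\setminus\{0\}$ is a conic coisotropic submanifold, its flowout is the Lagrangian $\Lambda_{\G_0}\subset(T^*\Rb^n\times T^*\Rb^n)'$ swept out by the null bicharacteristic leaves of $\G_0$; when $\G_0$ has codimension one it is automatically coisotropic, and writing $\G_0=\{p=0\}$ for a homogeneous function $p$ with $\D p\neq 0$ on $\G_0$ one has $\Lambda_{\G_0}=\{(x,\xi;y,\eta)\st (x,\xi)\in\G_0,\ (y,\eta)=\exp(rH_p)(x,\xi),\ r\in\Rb\}$. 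So I would split the task into: (i) identify $\pi_R(\Sigma)$ and verify it is a codimension-one conic submanifold; (ii) produce a defining function $p$; (iii) integrate $H_p$ and compare the result with the $\Lambda$ of the previous lemma.

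For (i): from the Proposition above, $\Sigma=\{(t,\vp,s,z)\st\g'(t)\cdot\xi=0\}$ with $\xi=\sum_i z_i v_i$, and $\pi_R(t,\vp,s,z)=(x,\xi)$ with $x=\g(t)+s\vec{\vp}$. Since $\vec{\vp}\perp v_i$ for all $i$, one finds
\[
\pi_R(\Sigma)=\big\{(x,\xi)\in T^*\Rb^n\setminus\{0\}\st \exists\, t\ \text{with}\ (\g(t)-x)\cdot\xi=0\ \text{and}\ \g'(t)\cdot\xi=0\big\},
\]
geometrically, the set of $(x,\xi)$ for which the hyperplane $x+\xi^\perp$ is tangent to $\g$ at $\g(t)$. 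Because $\pi_R$ has a fold along the (locally embedded) hypersurface $\Sigma\subset C$, the restriction $\pi_R|_\Sigma$ is an immersion, so $\pi_R(\Sigma)$ is a conic submanifold of codimension one in $T^*\Rb^n$, hence coisotropic.

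For (ii)--(iii): put $\phi(x,\xi,t)=(\g(t)-x)\cdot\xi$, so $\PD_t\phi=\g'(t)\cdot\xi$ and $\PD_t^2\phi=\g''(t)\cdot\xi$. The restriction $\g''(t)\cdot\xi\neq 0$ built into \eqref{The three sets} lets the implicit function theorem solve $\PD_t\phi=0$ for a smooth $t=t(\xi)$ depending only on $\xi$, homogeneous of degree zero; set $p(x,\xi)=\phi(x,\xi,t(\xi))=(\g(t(\xi))-x)\cdot\xi$, so $\pi_R(\Sigma)=\{p=0\}$ microlocally. Using $\g'(t(\xi))\cdot\xi\equiv 0$ gives $\PD_{x_j}p=-\xi_j$ and $\PD_{\xi_j}p=\g_j(t(\xi))-x_j$, hence $\D p\neq 0$ and
\[
H_p=\big(\g(t(\xi))-x\big)\cdot\PD_x+\xi\cdot\PD_\xi.
\]
Integrating, $\xi(r)=e^r\xi_0$, so $t(\xi(r))=t(\xi_0)=:t_0$ is constant along the flow, and $\dot x=\g(t_0)-x$ yields $x(r)=\g(t_0)+e^{-r}(x_0-\g(t_0))$. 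Writing $x_0-\g(t_0)=\tau\theta$ with $\theta\in\Sb^{n-1}$ and $\tau\neq 0$ (so $\theta\perp\xi_0$, since $p(x_0,\xi_0)=0$), the flowed point is $\big(\g(t_0)+\tilde\tau\theta,\ \tfrac{\tau}{\tilde\tau}\xi_0\big)$ with $\tilde\tau=e^{-r}\tau$ (as $\tau/\tilde\tau=e^r$), and $\g'(t_0)\cdot\xi_0=0$. This is precisely the parametrization of $\Lambda$ from the previous lemma; the sign constraint $\tilde\tau/\tau>0$ inherited from $r\in\Rb$ is harmless, since $\Delta\cap\Lambda$ (where $\tau=\tilde\tau$) lies in this branch and only a conic neighborhood of $\Delta\cap\Lambda$ in $\Lambda$ enters the subsequent $I^{p,l}$ analysis. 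Thus $\Lambda=\Lambda_{\pi_R(\Sigma)}$.

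The main obstacle is Step (ii): producing a clean defining function $p$ for $\pi_R(\Sigma)$ with $\D p\neq 0$, which is exactly where the hypothesis $\g''(t)\cdot\xi\neq 0$ is used; after that, everything is an explicit linear ODE and a parametrization match. I would also need to check, using the uniform bound on the number of intersections of hyperplanes with $\g$ from Remark \ref{Restrictions on the curve}, that these local pieces assemble into the single global Lagrangian $\Lambda$, and invoke the Melrose--Uhlmann and Guillemin--Uhlmann \cite{MU,Guillemin-Uhlmann} description of flowouts to conclude.
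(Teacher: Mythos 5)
Your argument is essentially correct, and there is in fact nothing in the paper to compare it against: the lemma is quoted from Lan's thesis and no proof is given in the text, so the only data you had to match is the explicit description of $\Lambda$ in the preceding lemma. Your route is the natural direct verification: identify $\pi_{R}(\Sigma)$ as the tangency set $\{(x,\xi)\st \exists\, t,\ (\g(t)-x)\cdot\xi=0,\ \g'(t)\cdot\xi=0\}$, use the standing hypothesis $\g''(t)\cdot\xi\neq 0$ to solve $\g'(t)\cdot\xi=0$ for a degree-zero $t(\xi)$ and obtain the homogeneous defining function $p(x,\xi)=(\g(t(\xi))-x)\cdot\xi$, and then integrate $H_{p}=(\g(t(\xi))-x)\cdot\partial_{x}+\xi\cdot\partial_{\xi}$. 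The computations check out: $\partial_{\xi}p=\g(t(\xi))-x$ precisely because $\g'(t(\xi))\cdot\xi\equiv 0$, the flow is the linear ODE you solve, and the flowed pair is exactly $(x,\xi;\g(t_{0})+\tilde\tau\theta,\frac{\tau}{\tilde\tau}\xi)$ with $\tilde\tau=e^{-r}\tau$, which is the parametrization of $\Lambda$. The one point to keep explicit is the one you already flag: with the paper's literal definition, $\Lambda$ admits $\tau,\tilde\tau$ of opposite signs, whereas the connected $H_{p}$-leaf through $(x_{0},\xi_{0})$ only sweeps out $\tau/\tilde\tau>0$ (covectors that are positive multiples of $\xi_{0}$), so strictly the flowout is that component of $\Lambda$; since the opposite-sign component is disjoint from $\Delta$ (on $\Delta\cap\Lambda$ one has $\tau=\tilde\tau$), and away from $\Delta$ the normal operator is a classical FIO associated to a known Lagrangian, this discrepancy does not affect the $I^{p,l}(\Delta,\Lambda)$ structure near $\Sigma$ nor the form of the error term in Theorem \ref{Main theorem}. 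A small remark: once you have the local defining function $p$ with $\D p\neq 0$, the fact that $\pi_{R}(\Sigma)$ is a conic hypersurface (hence coisotropic) is automatic, so the appeal to the fold/immersion property of $\pi_{R}|_{\Sigma}$ is not really needed; and your final gluing remark (finitely many tangency branches $t_{q}(\xi)$, handled one at a time) is indeed all that is required, since the lemma is only used microlocally.
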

\section{The principal symbol of the operator $\Rc_{\g}^{*}\Rc_{\g}$}\label{sect:principal symbol}

In this section, following the techniques of \cite{LanThesis, Ramaseshan2004}, we compute the principal symbol of the 
Schwartz kernel of the operator 
\[
A^{i_{1}\cdots i_{m}j_{1}\cdots j_{m}}=A= \Rc_{\g}^{*}\omega^{i_1}\dots\omega^{i_m}\omega^{j_1}\dots\omega^{j_m} \Rc_{\g}
\]
 on the diagonal Lagrangian $\Delta$ away from the set $\Sigma$, that is, on $\Delta \setminus \Sigma$. In order to do this, we use  \cite[Definition 4.1.1]{Duistermaat}. We compute the  leading order term as $\lambda\rightarrow \infty$ of $e^{i\lambda\psi}(A(\chi e^{-i\lambda\psi}))$ where $\chi$ and $\psi$ are suitably chosen cutoff and phase functions respectively.  We use the method of stationary phase to compute this leading order term.  Let $w_0 = (x_0,\xi, x_0,\xi)\in \Delta \setminus \Sigma$. The phase function $\psi(x,y,w_{0})$ is to be chosen such that the graph $\Gamma$ of $(x,y) \mapsto \nabla_{x,y}\psi(x,y,w_{0})$ intersects the diagonal $\Delta$ transversely at $w_0$ (see \cite[Definition 4.1.1]{Duistermaat}), where $\Delta=\{(x,\eta, x,\eta); x\in \Rb^{n}, \eta\in \Rb^{n}\setminus \{0\}\}$.

Following \cite{Lan2003} (see also \cite{Ramaseshan2004}), we define $\psi$ as follows:
$$ \psi(x,y,w_{0}) = \langle x-x_0,\xi \rangle +\langle y-x_0,-\xi \rangle +\frac{1}{2}|x-x_0|^2h(\xi,-\xi) +\frac{1}{2}|y-x_0|^2k(\xi,-\xi)$$
where $h$ and $k$ are homogeneous of degree $1$, non-negative and do not vanish simultaneously. Then we have 
\[
\Gamma=\{x,y, \xi+(x-x_{0})h(\xi,-\xi), -\xi+(y-x_{0})k(\xi,-\xi): (x,y)\in \Rb^{n}\times \Rb^{n}\}.
\]
 
To prove the transversality condition, we calculate the tangent space $T_{w_0}(\Delta)$ and $T_{w_0}(\Gamma)$.

We have that $T_{w_0}(\Delta)$ is spanned by the columns of the $4n\times 2n$ matrix
\begin{align*}
	\begin{pmatrix}
		I_n & 0\\
		I_n & 0\\
		0 & I_n\\
		0 & I_n
	\end{pmatrix}
\end{align*}
and $T_{w_0}(\Gamma)$ is spanned by the columns of $4n \times 2n$ matrix
\begin{align*}
	\begin{pmatrix}
		I_n & 0\\
		0 & I_n \\
		d^2_{xx}\psi & d^2_{xy}\psi\\
		d^2_{xy}\psi & d^2_{yy}\psi
	\end{pmatrix}=\begin{pmatrix}
		I_n & 0\\
		0 & I_n \\
		hI_n & 0\\
		0 & kI_n
	\end{pmatrix}.
\end{align*} 
Consider the combined $4n \times 4n$ matrix and apply column and row operations
\begin{align*}
	&\begin{pmatrix}
		I_n & 0 & I_n & 0\\
		I_n & 0 & 0 & I_n\\
		0 & I_n & hI_n & 0\\
		0 & -I_n & 0 & kI_n\\
	\end{pmatrix}
	\sim  \begin{pmatrix}
		0 & 0 & I_n & 0\\
		0 & 0 & 0 & I_n\\
		-hI_n & I_n & hI_n & 0\\
		-kI_n & -I_n & 0 & kI_n\\
	\end{pmatrix}
	\sim  \begin{pmatrix}
		0 & 0 & I_n & 0\\
		0 & 0 & 0 & I_n\\
		0 & I_n & hI_n & 0\\
		-(h+k)I_n & -I_n & 0 & kI_n\\
	\end{pmatrix}\\
	&\sim  \begin{pmatrix}
		0 & 0 & I_n & 0\\
		0 & 0 & 0 & I_n\\
		0 & I_n & 0 & 0\\
		-(h+k)I_n & -I_n & 0 & 0\\
	\end{pmatrix}
	\sim  \begin{pmatrix}
		0 & 0 & I_n & 0\\
		0 & 0 & 0 & I_n\\
		0 & I_n & 0 & 0\\
		-(h+k)I_n & 0 & 0 & 0
	\end{pmatrix}.
\end{align*}
The combined matrix has rank $4n$ and hence the transversality condition is satisfied for our choice of $\psi$.

\par Let $K_A$ be the Schwartz kernel of $A$. For test functions $\phi_1(x)$ and $\phi_2(y)$ supported near $x_0$, we have
\begin{align*}
	\notag & \langle K_A,e^{-i\psi(x,y,w_0)}\phi_1\phi_2\rangle
	= \\
	&\langle\omega^{i_1}\cdots\omega^{i_m}\Rc_{\g}(e^{-i(\langle y-x_0,-\xi \rangle+\frac{1}{2}|y-x_0|^2k(\xi,-\xi)}\phi_{2}),
	\omega^{j_1}\cdots\omega^{j_m}\Rc_{\g}( e^{-i(\langle x-x_0,\xi \rangle+\frac{1}{2}|x-x_0|^2h(\xi,-\xi)}\phi_{1})\rangle\\
	&=\int \omega^{i_1}\cdots\omega^{i_m}\omega^{j_1}\cdots\omega^{j_m}\phi_1(\g(t)+s_1\omega)\phi_2(\g(t)+s_2\omega)e^{-i\lambda\tilde\psi}\D s_1\D s_2\D t\D \omega\\
	\notag &=\int e^{-i\lambda\tilde\psi}\omega^{i_1}(\theta)\cdots\omega^{i_m}(\theta)\omega^{j_1}(\theta)\cdots\omega^{j_m}(\theta)\phi_1(\g(t)+s_1\omega(\theta))\phi_2(\g(t)+s_2\omega(\theta)) \\
	& \hspace{0.5in}\prod\limits_{k=1}^{n-2} \lb \sin \theta_{k}\rb^{n-1-k}\D s_1\D s_2\D t \D \theta_{1}\cdots \D \theta_{n-1},
\end{align*}
where
\begin{align*}
	\tilde \psi(s_1,s_2,t,\theta_1,\theta_2,\dots,\theta_{n-1}) &= (s_1-s_2)\omega\cdot\xi_0+\frac{1}{2}|\g(t)+s_1\omega(\theta)-x_0|^2h(\xi_0,-\xi_0)\\
	&\quad +\frac{1}{2}|\g(t)+s_2\omega(\theta)-x_0|^2k(\xi_0,-\xi_0),
\end{align*}
$|\xi| = \lambda,\ \xi_0=\frac{\xi}{\lambda},\ \omega= (\omega^1,\omega^2,\dots,\omega^n)$ with
\begin{align*}
	\omega^1 &= \cos \theta_{1}\\
	\omega^{2} &=\sin \theta_{1}\cos \theta_{2}\\
	\omega^{3}&= \sin \theta_{1} \sin \theta_{2}\cos \theta_{3}\\
	\vdots\\
	\omega^{n}&=\sin\theta_1\sin\theta_2\dots \sin\theta_{n-2}\sin\theta_{n-1}
\end{align*}
with $\theta_i \in (0,\pi)$ for $i=1,2,3,\dots,(n-2)$ and $\theta_{n-1}\in [0,2\pi]$.

We apply the method of stationary phase to the above integral. We compute the critical points of $\tilde\psi$ as a function of $s_1,s_2,t,\theta_1,\dots,\theta_{n-2}$ and $\theta_{n-1}$. We have 
\begin{align*}
	&\wt\psi_{s_1} = \langle\omega,(\g(t)+s_1\omega-x_0)h+\xi_0\rangle, \\
	&\wt\psi_{s_2} = \langle\omega,(\g(t)+s_2\omega-x_0)k-\xi_0\rangle ,\\
	&\wt\psi_t = \langle \g^\prime(t),(\g(t)+s_1\omega-x_0)h+(\g(t)+s_2\omega-x_0)k\rangle,\\
	&\wt\psi_{\theta_1} = \langle \omega_{\theta_1},(s_1-s_2)\xi_0+s_1(\g(t)+s_1\omega-x_0)h+s_2(\g(t)+s_2\omega-x_0)k \rangle,\\
	&\wt\psi_{\theta_2} = \langle \omega_{\theta_2},(s_1-s_2)\xi_0+s_1(\g(t)+s_1\omega-x_0)h+s_2(\g(t)+s_2\omega-x_0)k \rangle,\\
	&\wt\psi_{\theta_3} = \langle \omega_{\theta_3},(s_1-s_2)\xi_0+s_1(\g(t)+s_1\omega-x_0)h+s_2(\g(t)+s_2\omega-x_0)k \rangle,\\
	& \hspace{2in}\vdots \\
	&\wt\psi_{\theta_{n-1}} = \langle \omega_{\theta_{n-1}},(s_1-s_2)\xi_0+s_1(\g(t)+s_1\omega-x_0)h+s_2(\g(t)+s_2\omega-x_0)k \rangle.
	\end{align*}
	Now multiplying $\wt{\psi}_{s_{1}}$ by $k$ and $\wt{\psi}_{s_{2}}$ by $h$, subtracting and setting it to $0$, we get, 
	\begin{align*}
	k\tilde\psi_{s_1}-h\tilde\psi_{s_2} &= 0 \mbox{ implies }  \langle\omega,\omega(s_1-s_2)hk +(h+k)\xi_0\rangle = 0.
	\end{align*}
	From this we have 
	\begin{align}\label{Omega-xi0}
	\langle\omega,\xi_0\rangle &= \frac{(s_2-s_1)hk}{(h+k)}.
\end{align}

Now we have that $(x_{0},x_{0},\nabla_{x,y}\psi(x_{0},y_{0},w_{0}))=w_{0}\in \Delta$. Due to the transverse intersection of the graph of $(x,y)\to \nabla_{x,y}\psi(x,y,w_{0})$ at $w_{0}$, by the implicit function theorem, we have that given $w$ in a small enough conical neighborhood of $w_{0}$, there exists a small  neighborhood of $(x_{0},x_{0})$ and a unique $(x,y)$ in this neighborhood such that $(x,y,\nabla_{x,y}\psi(x,y,w))\in \Delta$. Now let $(s_{1},s_{2}, t, \theta_{1},\cdots,\theta_{n-1})$ be a critical point for the function $\wt{\psi}$. We have that 
\[
\g(t)+s_{1} \omega, \g(t)+s_{2}\om, \xi+(\g(t)+s_{1}\om-x_{0}) h(\xi,-\xi), \xi-(\g(t)+s_{2}\om-x_{0})k(\xi,-\xi)\in \Delta.
\]
This then implies that $s_{1}=s_{2}$. Substituting this into \eqref{Omega-xi0}, we get that $\langle \omega, \xi_{0}\rangle=0$.
Substituting $s= s_1=s_2$ in $\wt\psi_{\theta_1}$,$\wt\psi_{\theta_2},\dots,\wt\psi_{\theta_{n-1}} $ and in $s_1\wt\psi_{s_1}+s_2\wt\psi_{s_2}$, we get that $(\g(t)+s\omega-x_0)$ is orthogonal to each of the vectors $\omega$, $\omega_{\theta_1},\dots ,\omega_{\theta_{n-1}}$. But $\omega$, $\omega_{\theta_1},\omega_{\theta_2},\dots, \omega_{\theta_{n-2}}$ and $\omega_{\theta_{n-1}}$ are mutually orthogonal and therefore we have that $\g(t)+s\omega-x_0=0$.

Summarizing the above calculations, we have the following: 
Given a diagonal element, $w_{0}=(x_{0},\xi,x_{0},\xi)\in \Delta \setminus \Sigma$, at a critical point $(s,s,t,\theta_{1},\cdots,\theta_{n-1})$  of $\wt{\psi}$, the plane passing through $x_{0}$ and perpendicular to $\xi$ intersects the curve at $\g(t)$, and $x_{0}$ lies on the line passing through $\g(t)$ in the direction $\omega$ (which we have already determined is perpendicular to $\xi_{0}$).

The Hessian at the critical point is given by
\begin{equation*}
	\D^2_{s_1s_2t\theta_1\theta_2\dots\theta_{n-1}}\wt{\psi} = \begin{bmatrix}
		A & B\\
		B^t & C
	\end{bmatrix}
\end{equation*}
where
\begin{align*}
	&A = \begin{pmatrix}
		h & 0 & (\omega\cdot \g^\prime(t))h\\
		0 & k & (\omega\cdot \g^\prime(t))k\\
		(\omega\cdot \g^\prime(t))h & (\omega\cdot \g^\prime(t))k &|\g^\prime(t)|^2(h+k)
	\end{pmatrix}, \\\\
	&B^t = \begin{pmatrix}\xi_0\cdot\omega_{\theta_1} & -\xi_0\cdot\omega_{\theta_1} & (\g^\prime(t)\cdot\omega_{\theta_1})(h+k)s\\ \xi_0\cdot\omega_{\theta_2} & -\xi_0\cdot\omega_{\theta_2} & (\g^\prime(t)\cdot\omega_{\theta_2})(h+k)s\\\xi_0\cdot\omega_{\theta_3} & -\xi_0\cdot\omega_{\theta_3} & (\g^\prime(t)\cdot\omega_{\theta_3})(h+k)s\\ \vdots &\vdots &\vdots\\
	 \xi_0\cdot\omega_{\theta_{n-1}} & -\xi_0\cdot\omega_{\theta_{n-1}} & (\g^\prime(t)\cdot\omega_{\theta_{n-1}})(h+k)s
	\end{pmatrix},
\end{align*}
\begin{align*}
	&C = s^2(h+k)\begin{pmatrix}
		1 & 0 & 0 & \cdots & 0\\
		0 & (\sin\theta_1)^2 & 0 & \cdots & 0\\
		0 & 0 & (\sin\theta_1\sin\theta_2)^2 & \cdots& 0\\
		\vdots &\vdots &\vdots &\ddots&\vdots\\
		0 & 0 & 0 & \cdots & (\sin\theta_1\sin\theta_2\dots\sin\theta_{n-2})^2
	\end{pmatrix}.
\end{align*}
We have
\begin{align*}
	\begin{pmatrix}
		I_3 &-BC^{-1}\\
		0 & I_{n-1}
	\end{pmatrix}
	\begin{pmatrix}
		A & B\\
		B^t & C
	\end{pmatrix}
	\begin{pmatrix}
		I_3 & 0\\
		-C^{-1}B & I_{n-1}\\  
	\end{pmatrix}
	= \begin{pmatrix}
		A-BC^{-1}B^t & 0\\
		0 & C
	\end{pmatrix}.
\end{align*}
Then $\det(\D^2\wt{\psi})=\det(A-BC^{-1}B^t)\times \det(C)$ and sgn$(\D^2\wt{\psi})$= sgn$(A-BC^{-1}B^t)+$sgn$(C)$, where $\mbox{sgn}$ stands for the difference between the number of positive eigenvalues and the number of negative eigenvalues.
We have
\[
BC^{-1}B^t =\frac{1}{s^2(h+k)} \begin{pmatrix}
(n-1) &-(n-1) & Ks(h+k)\\
-(n-1) & (n-1) & -Ks(h+k)\\
Ks(h+k) & -Ks(h+k) & Ls^2(h+k)^2\\
\end{pmatrix},
\]
where $$K = \sum_{i=1}^{n-1}\left(\xi_0\cdot\frac{\omega_{\theta_i}}{\sin{\theta_1}\sin{\theta_2}\dots\sin{\theta_{i-1}}}\right)\left(\g^\prime(t)\cdot\frac{\omega_{\theta_i}}{\sin{\theta_1}\sin{\theta_2}\dots\sin{\theta_{i-1}}}\right)$$
and 
$$
L =\sum_{i=1}^{n-1}\left|\left(\g^\prime(t)\cdot \frac{\omega_{\theta_i}}{\sin{\theta_1}\sin{\theta_2}\ldots\sin{\theta_{i-1}}}\right)\right|^2.$$
Since $\xi_0\cdot\omega=0$, we have
\begin{align*}
	\xi_0 = \sum_{i=1}^{n-1}A_i \frac{\omega_{\theta_i}}{\sin{\theta_1}\sin{\theta_2}\ldots\sin{\theta_{i-1}}} \mbox{ for some} A^{i}, 1\leq i\leq n-1.
\end{align*}
Then
\[
 K  = \sum_{i=1}^{n-1} A_i\left(\g^\prime(t)\cdot\frac{\omega_{\theta_i}}{\sin{\theta_1}\sin{\theta_2}\dots \sin{\theta_{i-1}}}\right).
\]
Taking dot product with $ \g^\prime(t)$ with $\xi_0$ above, we have 
$$ K  = \g^\prime(t)\cdot\xi_0.$$
Now let us take 

$$F = \frac{n-1}{s^2(h+k)} \mbox{ and }  G = \frac{\g^\prime(t)\cdot \xi_0}{s}.$$
We then have
$$ BC^{-1}B^t = \begin{pmatrix}
F & -F & G\\
-F & F & -G\\
G & -G & L(h+k)
\end{pmatrix}$$
and 
$$A - BC^{-1}B^t = \begin{pmatrix}
h-F & F & (\g^\prime(t)\cdot\omega)h-G\\
F & k-F & (\g^\prime(t)\cdot\omega)k+G\\
(\g^\prime(t)\cdot\omega)h-G &(\g^\prime(t)\cdot\omega)k+G & H
\end{pmatrix}$$
where $ H = (| \g^\prime(t)|^2-L)(h+k) = |\g^\prime(t)\cdot \omega|^2(h+k)$.

Now 

\[
\text{ det }(A-BC^{-1}B^t) = -(h+k)G^2 = -\frac{h+k}{s^2}(\g^\prime(t)\cdot\xi_0)^2,
\]
and 
\[
 \det (C) = \prod_{k=1}^{n-2}(\sin^2\theta_k)^{n-1-k}.
 \]
Therefore 
\begin{align*}
	\left|\det (\D^2_{s_1,s_2,t,\theta_1,\theta_2,\dots,\theta_{n-1})})\right| &= \frac{h+k}{s^2} (\g^\prime(t)\cdot\xi_0)^2s^{2(n-1)}(h+k)^{n-1}\det (C)\\
	\\ &= (\g^\prime(t)\cdot\xi_0)^2(h+k)^n|\g(t)-x_0|^{2(n-2)}\det (C).
\end{align*}
From the above equality, we have that a critical point is non-degenerate if $\g^\prime(t)\cdot\xi_0\neq0$. Geometrically, this means that the plane $x+\xi^\perp$ intersects the curve transversely.	

Since $ \det (A-BC^{-1}B^t) = -\frac{h+k}{s^2}(\g^\prime(t)\cdot\xi_0)^2$, we have that this determinant is strictly negative due to the choice of $h$ and $k$. Therefore $\mbox{sgn}(A-BC^{-1}B^t)$ is either $1$ or $-3$. Consider 
$$\begin{bmatrix}
0 & 0 & 1
\end{bmatrix}(A-BC^{-1}B^t) \begin{bmatrix}
0 \\ 0 \\ 1
\end{bmatrix} = H  =|a^\prime(t)\cdot \omega|^2(h+k) >0.$$ 
Thus $A-BC^{-1}B^t$ has at least one positive eigenvalue which implies  sgn$(A-BC^{-1}B^t)$ is $1$. Combining $\mbox{sgn}(A-BC^{-1}B^t)$ and sgn$(C)$, we conclude that the $\mbox{sgn}(\D^2\tilde\psi)$ is $n$.

Using the method of stationary phase, the leading term in the stationary phase expansion of $e^{\I\lambda\tilde\psi}(A(\chi e^{i\lambda\psi})$ as $\lambda\to \infty$ is
\begin{align*}
	\frac{(2\pi)^\frac{n+2}{2} e^\frac{n\pi i}    {4}\phi_1(x_0)\phi_2(x_0)\omega^{i_1}(t_q)\cdots\omega^{i_m}(t_q)\omega^{j_1}(t_q)\cdots\omega^{j_m}(t_q)}{\lambda^\frac{n+2}{2}(h+k)^\frac{n}{2}|(\g^\prime(t_{q})\cdot\xi_0)||(\g(t_q)- x_0)|^{n-2}}.
\end{align*}
Here $\{\g_{t_{q}}\}$ are the intersection points of the curve $\g$ with the plane $x+\xi^{\perp}$. Note that for each intersection point $t_{q}$ on the curve $\g$, the $\det(C)$ term in the Hessian cancels with the $\det(C)$ expression in the integrand.

The principal symbol $\sigma_{0}(x_{0},\xi,x_{0},\xi)$ (which by abuse of notation, we will denote it by $\sigma_{0}(x_{0},\xi)$) is the sum of the expressions arising from all the critical points of the phase function. We have 
\begin{align}\label{eq:a}
	\sigma_0(x_{0},\xi)= \sum_q  \frac{(2\pi)^\frac{n+2}{2} e^\frac{n\pi i}    {4}\phi_1(x_0)\phi_2(x_0)\omega^{i_1}(t_q)\cdots\omega^{i_m}(t_q)\omega^{j_1}(t_q)\cdots\omega^{j_m}(t_q)}{\lambda^\frac{n+2}{2}(h(\xi_0,-\xi_0)+k((\xi_0,-\xi_0))^\frac{n}{2}|(\g^\prime(t_q(\xi_0))\cdot\xi_0)||(\g(t_q(\xi_0))- x_0)|^{n-2}}.
\end{align}
Note that due to the condition imposed on the closed conic set $K$ in the statement of Theorem \ref{Main theorem}, we have that the intersection points are all transverse intersection points.

The expression for the principal symbol above is dependent on the choice of $\psi$ (due to its dependence on $h$ and $k$), and the cutoff functions $\phi_1$ and $\phi_2$. To make it independent of the choice of these functions, we divide \eqref{eq:a} by the principal symbol of the identity operator obtained by the same cutoff functions and $\psi$. The principal symbol of the identity operator can be computed analogously  by  computing the  leading order term in the following integral as $\lambda\rightarrow\infty$
\begin{align}\label{Identity operator}
	\langle \delta_\Delta,e^{-i\psi(x,y,\omega_0)}\phi_1(x)\phi_2(y)\rangle = \int\phi_1(x)\phi_2(x)e^{-i\lambda\frac{1}{2}|x-x_0|^2(h(\xi_0,-\xi_0)+k((\xi_0,-\xi_0))}\D x.
\end{align}
We have that $x = x_0$ is the only critical point for phase function $\frac{1}{2}|x-x_0|^2(h(\xi_0,-\xi_0)+k((\xi_0,-\xi_0))$ and Hessian at the critical point is $(h+k)I_n$. Hence the principal symbol of \eqref{Identity operator} is
\begin{align*}
	\frac{(2\pi)^\frac{n}{2} e^\frac{n\pi i}{4}\phi_1(x_0)\phi_2(x_0)}{\lambda^\frac{n}{2}(h+k)^\frac{n}{2}}.
\end{align*}
Dividing \eqref{eq:a} by the principal symbol of identity operator above, we get the principal symbol $A_{0}(x_{0},\xi,x_{0},\xi)$ (which we will denote by $A_{0}(x_{0},\xi)$) of $\Rc^{*}_{\g}\omega^{i_1}\cdots\omega^{i_m}\omega^{j_1}\cdots\omega^{j_m} \Rc_{\g}$ (here $\Rc_{\g}$ is the restricted ray transform on functions) on $\Delta\setminus \Sigma$ to be
\begin{align*}
	A_0 (x_{0},\xi)_= \sum_q  \frac{2\pi\omega^{i_1}(t_q)\cdots\omega^{i_m}(t_q)\omega^{j_1}(t_q)\cdots\omega^{j_m}(t_q)}{|\xi||\g^\prime(t_q(\xi_0)\cdot\xi_0)||\g(t_q(\xi_0)- x_0)|^{n-2}}.
\end{align*}
In the next section, we will prove ellipticity of this symbol on solenoidal tensor fields.
\section{Ellipticity of the symbol on the solenoidal part}\label{sect:ellipticity}
\begin{proposition}\label{Ellipticity Proposition}
	Let $f$ be a solenoidal symmetric $m$-tensor field in $\mathbb{R}^n$ in the sense that $\xi^{i_{m}}f_{i_{1}\cdots i_{m}}(\xi)=0$. Suppose $A_0 f =0$ then $f=0$.
\end{proposition}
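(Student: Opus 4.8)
The plan is to use the explicit form of the principal symbol computed in the preceding section, which shows that $A_{0}$ is a positive‑coefficient sum of rank‑one contractions. Regarded as a linear map on symmetric $m$‑tensors, $A_{0}(x_{0},\xi)$ acts on the Fourier transform $\widehat f(\xi)$ by
\[
(A_{0}f)^{i_{1}\cdots i_{m}}(x_{0},\xi)=\sum_{q}c_{q}(x_{0},\xi)\,\omega^{i_{1}}(t_{q})\cdots\omega^{i_{m}}(t_{q})\,\big\langle\widehat f(\xi),\omega(t_{q})^{\otimes m}\big\rangle,
\]
where the $\g(t_{q})$ are the (finitely many, by Remark \ref{Restrictions on the curve}) points of $\g\cap(x_{0}+\xi^{\perp})$, the unit vectors $\omega(t_{q})$ point from $x_{0}$ to $\g(t_{q})$ and hence lie in $\xi^{\perp}$, and $c_{q}(x_{0},\xi)=2\pi/(|\xi|\,|\g'(t_{q})\cdot\xi_{0}|\,|\g(t_{q})-x_{0}|^{n-2})$ is strictly positive whenever $\g$ meets $x_{0}+\xi^{\perp}$ transversely at $\g(t_{q})$, that is, whenever $(x_{0},\xi)\notin\Sigma$. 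First I would pair the hypothesis $A_{0}f=0$ with $\widehat f(\xi)$ in the standard Hermitian inner product on tensor components; since the $\omega(t_{q})$ are real, this collapses to the scalar identity $0=\sum_{q}c_{q}(x_{0},\xi)\,|\langle\widehat f(\xi),\omega(t_{q})^{\otimes m}\rangle|^{2}$. Positivity of every $c_{q}$ at points off $\Sigma$ then forces $\langle\widehat f(\xi),\omega(t_{q})^{\otimes m}\rangle=0$ for every point of $\g\cap(x_{0}+\xi^{\perp})$.

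Next I would bring in the Kirillov–Tuy condition. Fix $\xi\ne 0$. Since $\g$ is regular, only a measure‑zero set of hyperplanes is tangent to $\g$, so for almost every direction $\xi_{0}$ I can choose an offset so that the hyperplane $H=x_{0}+\xi^{\perp}$ meets the ball $B$, meets $\g$ transversely, and contains a Kirillov–Tuy family $\g(t_{1}),\dots,\g(t_{L(n,m)})$ (Definition \ref{K-T Defn}); I can then choose a base point $x_{0}\in H\cap B$, off the curve, for which the collection $\{\g(t_{i})-x_{0}:1\le i\le L(n,m)\}$ is generic in $H\cong\Rb^{n-1}$. For such a pair $(x_{0},\xi)$, which lies off $\Sigma$, the first step yields $\langle\widehat f(\xi),\omega(t_{i})^{\otimes m}\rangle=0$ for $i=1,\dots,L(n,m)$, and genericity is preserved under the positive rescalings $\g(t_{i})-x_{0}\mapsto\omega(t_{i})$.

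To conclude, I would use the solenoidal hypothesis $\xi^{i}\widehat f_{i\,i_{2}\cdots i_{m}}(\xi)=0$: the symmetric $m$‑linear form $\widehat f(\xi)$ vanishes whenever one of its arguments is $\xi$, so decomposing $\Rb^{n}=\xi^{\perp}\oplus\Rb\xi$ and expanding by multilinearity, $\widehat f(\xi)$ is completely determined by its restriction to $\xi^{\perp}$. Since $\{\omega(t_{i})\}_{i=1}^{L(n,m)}$ is a generic collection in $\xi^{\perp}\cong\Rb^{n-1}$ and $\widehat f(\xi)$ annihilates each $\omega(t_{i})^{\otimes m}$, the definition of genericity forces $\widehat f(\xi)|_{\xi^{\perp}}=0$, hence $\widehat f(\xi)=0$. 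This holds for almost every $\xi$; as $f$ is a compactly supported distribution (an element of $\Ec'(B)$ in the setting of this paper), $\widehat f$ is real‑analytic by the Paley–Wiener–Schwartz theorem, so $\widehat f\equiv 0$ and therefore $f=0$.

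The step I expect to be the main obstacle is not any single computation but the measure‑theoretic bookkeeping in the middle paragraph: one must verify that, for almost every covector $\xi$, there really is a single concrete base point $x_{0}$ at which $x_{0}+\xi^{\perp}$ simultaneously (i) meets $\g$ transversely at all of its intersection points, so that $(x_{0},\xi)\notin\Sigma$ and every $c_{q}$ is finite and positive, (ii) contains a genuine Kirillov–Tuy family of $L(n,m)$ points, and (iii) has the directions from $x_{0}$ to those points forming a generic collection in the hyperplane. Checking that the ``almost all hyperplanes / almost all points'' formulation of Definition \ref{K-T Defn}, combined with the regularity of $\g$ and the uniform intersection bound from Remark \ref{Restrictions on the curve}, still leaves a full‑measure set of usable pairs $(x_{0},\xi)$ is the only delicate point; everything else is linear algebra together with the analyticity of $\widehat f$.
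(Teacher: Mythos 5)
Your proposal is correct, and its first half is exactly the paper's opening move: pair $A_0\widehat f=0$ with $\overline{\widehat f}$ so that positivity of the coefficients $2\pi/(|\xi|\,|\g'(t_q)\cdot\xi_0|\,|\g(t_q)-x_0|^{n-2})$ at transverse intersection points forces $\langle\widehat f(\xi),\omega(t_q)^{\otimes m}\rangle=0$ for every $q$. Where you genuinely diverge is in how solenoidality is used. The paper stays with a fixed $(x_0,\xi)$, adjoins to these $L(n,m)$ conditions the extra contractions coming from $\xi^{i_m}f_{i_1\cdots i_m}=0$, i.e.\ the tensors $\omega(t_{j_1})\odot\cdots\odot\omega(t_{j_{m-1}})\odot\xi$, and shows the resulting $\binom{n+m-1}{m}\times\binom{n+m-1}{m}$ matrix $A$ has full rank (via Lemma \ref{lemma: multinomial independence for symmetric product} applied to $\omega(t_1),\dots,\omega(t_{n-1}),\xi$ together with the Kirillov--Tuy condition). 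You instead note that solenoidality makes $\widehat f(\xi)$ determined by its restriction to $\xi^{\perp}$, and since $\dim S^m(\Rb^{n-1})=L(n,m)$, genericity of the $L(n,m)$ directions alone annihilates that restriction; this is a cleaner and shorter finish, while the paper's formulation produces the explicit matrices ($V$, $A_0=VV^t$) that are recycled in the parametrix construction. One caveat: your middle paragraph and the Paley--Wiener step answer a different, global question than the one the paper needs. Proposition \ref{Ellipticity Proposition} is invoked in \S\ref{sect:microlocal_inversion} as a pointwise statement about the symbol at a fixed $(x_0,\xi)$, namely that $A_0(x_0,\xi)$ is injective on the solenoidal subspace so that $\RS(\sigma_0(\Sc))=\RS(A_0)$ there; the hypotheses $K\subset\Xi_0$ with $\overline{\Xi}_0\subset\Xi_\Delta\cup\Xi_\Lambda$ already supply transversality and a generic family at every covector under consideration. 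So the ``for almost every $\xi$ choose a good $x_0$'' bookkeeping you flag as the main obstacle can simply be dropped: your linear algebra at a single admissible $(x_0,\xi)$ is the entire content, and upgrading to $f\equiv 0$ as a field via analyticity of $\widehat f$ is unnecessary (and, as a substitute for the pointwise claim, would not by itself suffice for the symbol calculus step).
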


We will need the following straightforward lemmas for Proposition \ref{Ellipticity Proposition}.
\begin{lemma}\label{1.1}
	If $\omega_{1},\cdots, \omega_{n}$ are $n$-linearly independent vectors in $\mathbb{R}^n$ and 
	\begin{equation}\label{eq:21}
	C_1\otimes\omega_{1}+C_2\otimes\omega_{2}+ \dots+C_n\otimes \omega_{n} =0
	\end{equation}
	($\otimes$ denotes the usual tensor product), $C_i \in \mathbb{R}^p$, for $i = 1,\dots, n$ and for an arbitrary $p$. Then each $C_i$ is the $0$-vector.
\end{lemma}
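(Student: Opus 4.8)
The plan is to reduce the tensor identity \eqref{eq:21} to a family of ordinary linear dependence relations among the $\omega_i$ by reading off components. Fix bases of $\mathbb{R}^p$ and $\mathbb{R}^n$, and write $C_i = \sum_{a=1}^{p} (C_i)_a\, e_a$ with respect to the chosen basis $\{e_a\}$ of $\mathbb{R}^p$, and $\omega_i = \sum_{b=1}^{n} (\omega_i)_b\, \tilde e_b$ with respect to the basis $\{\tilde e_b\}$ of $\mathbb{R}^n$. Then the $(a,b)$ component of $\sum_{i=1}^{n} C_i \otimes \omega_i$ equals $\sum_{i=1}^{n} (C_i)_a (\omega_i)_b$, so \eqref{eq:21} is equivalent to
$$\sum_{i=1}^{n} (C_i)_a\, \omega_i = 0 \in \mathbb{R}^n \qquad \text{for each fixed } a = 1,\dots,p.$$
Since $\omega_1,\dots,\omega_n$ are linearly independent, each such relation forces $(C_i)_a = 0$ for all $i$; letting $a$ range over $1,\dots,p$ gives $C_i = 0$ for every $i$.

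A cleaner, coordinate-free variant — the one I would actually present — uses the dual basis. Because $\omega_1,\dots,\omega_n$ are $n$ linearly independent vectors in $\mathbb{R}^n$, they form a basis, so there is a dual basis $\{\omega^1,\dots,\omega^n\} \subset (\mathbb{R}^n)^*$ with $\langle \omega^j, \omega_i \rangle = \delta_{ij}$. Applying the contraction $\mathrm{id}_{\mathbb{R}^p} \otimes\, \omega^j$ to both sides of \eqref{eq:21} and using bilinearity of $\otimes$ yields
$$\sum_{i=1}^{n} \langle \omega^j, \omega_i \rangle\, C_i \;=\; C_j \;=\; 0.$$
As $j \in \{1,\dots,n\}$ is arbitrary, every $C_i$ is the zero vector.

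There is essentially no obstacle here; this is a routine linear-algebra fact. The only points requiring a little care are: the contraction must be performed on the $\omega$-slot of the tensor product, not on the $C$-slot; and the existence of the dual basis (equivalently, invertibility of the matrix whose rows are the $\omega_i$) is exactly where the hypothesis of linear independence of precisely $n$ vectors in $\mathbb{R}^n$ is used. Everything else follows immediately from bilinearity of the tensor product.
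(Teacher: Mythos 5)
Your proposal is correct and your first (component-wise) argument is essentially identical to the paper's proof, which also expands $C_i=(c_{ij})_{j=1}^p$ and reduces \eqref{eq:21} to the relations $\sum_i c_{ij}\omega_i=0$ for each fixed $j$, then invokes linear independence. The dual-basis contraction you add is just a coordinate-free repackaging of the same computation, so there is nothing further to flag.
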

\begin{proof}
	Let $C_i = (c_{ij})_{j=1}^p \in \mathbb{R}^p$. Now \eqref{eq:21} can be written as
	\begin{align}\label{eq:22}
		\begin{pmatrix}
			c_{11}\\
			c_{12}\\
			\vdots\\
			c_{1p}\\
		\end{pmatrix}\otimes \omega(t_1)+  \begin{pmatrix}
		c_{21}\\
		c_{22}\\
		\vdots\\
		c_{2p}\\
	\end{pmatrix}\otimes \omega(t_2)+ \cdots +\begin{pmatrix}
	c_{n1}\\
	c_{n2}\\
	\vdots\\
	c_{np}\\
\end{pmatrix}\otimes\xi=0.
\end{align}
From the above equation, we get 
\[ c_{1j}\omega_{1}+ c_{2j}\omega_{2}+\cdots+ c_{nj}\omega_{n}=0 \text{ for } j=1,\dots, p.
\] 
which implies that $c_{ij}=0$ for $i=1,\cdots,n$ and $j=1,\dots,p$. 

% because $\omega(t_1), \dots, \omega(t_{n-1})$ and $\xi$ are  linearly independent. Hence each $C_i$ is a $0$-vector.
\end{proof}
The next statement is a standard fact, but we prefer to give a quick proof as it follows immediately from the previous lemma.
\begin{lemma}
	Let $\omega_{1},\cdots, \omega_{n}$ be linearly independent vectors in $\Rb^{n}$. For any $m$, the collection of $n^{m}$ $m$-tensors of the form 
	\[
	\omega_{i_{1}}\otimes \cdots\otimes \omega_{i_{m}} \mbox{ with } 1\leq i_{1},\cdots,i_{m}\leq n,
	\]
	is linearly independent as well.
\end{lemma}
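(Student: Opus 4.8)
The plan is to prove the statement by induction on the tensor order $m$, using Lemma \ref{1.1} as the engine at each step. The base case $m=1$ is exactly the hypothesis that $\om_{1},\dots,\om_{n}$ are linearly independent, so there is nothing to prove there. For the inductive step, I would assume that the $n^{m-1}$ tensors $\om_{i_{1}}\otimes\cdots\otimes\om_{i_{m-1}}$ with $1\leq i_{1},\dots,i_{m-1}\leq n$ are linearly independent, and aim to deduce the same for order $m$.

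Suppose we are given a vanishing linear combination
\[
\sum_{1\leq i_{1},\dots,i_{m}\leq n} c_{i_{1}\cdots i_{m}}\; \om_{i_{1}}\otimes\cdots\otimes\om_{i_{m}} = 0.
\]
The idea is to group the terms according to the value of the last index $i_{m}$: rewrite the sum as $\sum_{j=1}^{n} C_{j}\otimes \om_{j}$, where
\[
C_{j} = \sum_{1\leq i_{1},\dots,i_{m-1}\leq n} c_{i_{1}\cdots i_{m-1}\, j}\; \om_{i_{1}}\otimes\cdots\otimes\om_{i_{m-1}}
\]
is an element of $\Rb^{p}$ with $p=n^{m-1}$ (after fixing a basis of the space of $(m-1)$-tensors). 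Lemma \ref{1.1}, applied with these $C_{j}$, forces each $C_{j}=0$. But $C_{j}=0$ is precisely a vanishing linear combination of the order-$(m-1)$ tensors $\om_{i_{1}}\otimes\cdots\otimes\om_{i_{m-1}}$, so the inductive hypothesis gives $c_{i_{1}\cdots i_{m-1}\, j}=0$ for all choices of indices. Since $j$ was arbitrary, all coefficients $c_{i_{1}\cdots i_{m}}$ vanish, completing the induction.

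I do not anticipate a genuine obstacle here; the only mild care needed is in the bookkeeping of the identification of the space of $(m-1)$-tensors with $\Rb^{p}$ so that Lemma \ref{1.1} applies verbatim, and in noting that ``$C_{j}\otimes\om_{j}$'' in the ambient tensor space matches the grouping by last index. One could equally strip off the first index instead of the last; the choice is immaterial. An alternative, non-inductive route would be to observe that $\om_{i_{1}}\otimes\cdots\otimes\om_{i_{m}}$ is the image of the standard basis tensor $e_{i_{1}}\otimes\cdots\otimes e_{i_{m}}$ under the $m$-fold tensor power $T^{\otimes m}$ of the invertible linear map $T$ sending $e_{i}\mapsto\om_{i}$, and tensor powers of invertible maps are invertible hence send bases to bases; but since the paper explicitly wants a proof flowing from the previous lemma, the induction above is the cleaner presentation.
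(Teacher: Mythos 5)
Your proposal is correct and follows essentially the same path as the paper: induction on $m$, grouping the vanishing combination by one tensor factor as $\sum_{j} C_{j}\otimes\om_{j}$, applying Lemma \ref{1.1} to get each $C_{j}=0$, and then invoking the inductive hypothesis. The only difference is cosmetic (your remark about the alternative $T^{\otimes m}$ argument), so nothing further is needed.
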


\bpr
We use mathematical induction on $m$. For $m=1$, there are $n$ vectors $\omega_{1},\cdots,\omega_{n}$, and these are linearly independent by assumption. Now assume the collection of $k-1$ tensors $\omega_{i_{1}}\otimes\cdots \otimes \omega_{i_{k-1}}$ are linearly independent. 
Assume that the following linear combination is $0$: 
\[
c_{i_{1}\cdots i_{k}}\omega_{i_{1}}\otimes \cdots \otimes \omega_{i_{k}}=0.
\] 
We can write this as 
\[
C_{1}\otimes \omega_{1}+\cdots C_{n}\otimes \omega_{n}=0,
\]
where each $C_{i}$ is a linear combination involving the coefficients $c_{i_{1}\cdots i_{k}}$ of $k-1$ tensors of the form $\omega_{i_{1}}\otimes\cdots \otimes \omega_{i_{k-1}}$.
By the previous lemma each $C_{i}=0$, and then the result follows from the induction hypothesis.
\epr
\begin{lemma}\label{lemma: multinomial independence for symmetric product}
	Let $\omega_{1},\cdots, \omega_{n}$ be linearly independent vectors in $\Rb^{n}$. For any $m$, the collection of $n+m-1\choose m$ $m$-tensors of the form 
	\[
	\omega_{i_{1}}\odot \cdots\odot \omega_{i_{m}} \mbox{ with } 1\leq i_{1},\cdots,i_{m}\leq n,
	\]
	where, recall that $\odot$ denotes symmetric tensor product, is linearly independent as well.
\end{lemma}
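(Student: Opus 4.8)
The plan is to reduce the statement to the elementary fact that a polynomial on $\Rb^{n}$ which vanishes identically is the zero polynomial. First I would introduce coordinates adapted to the basis $\omega_{1},\dots,\omega_{n}$: let $\omega_{1}^{*},\dots,\omega_{n}^{*}$ be the dual basis of $(\Rb^{n})^{*}$, and to each symmetric $m$-tensor $F$ associate the homogeneous degree-$m$ polynomial $P_{F}\colon\Rb^{n}\to\Rb$, $P_{F}(y)=\langle F,\xi_{y}^{\,m}\rangle=F_{i_{1}\cdots i_{m}}\xi_{y}^{i_{1}}\cdots\xi_{y}^{i_{m}}$, where $\xi_{y}=\sum_{j=1}^{n}y_{j}\,\omega_{j}^{*}$; this is the full contraction notation already used in the paper, and $F\mapsto P_{F}$ is linear. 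Because $\{\omega_{j}\}$ is a basis, $\langle\omega_{i},\xi_{y}\rangle=y_{i}$, and hence for any symmetric product
\[
P_{\omega_{i_{1}}\odot\cdots\odot\omega_{i_{m}}}(y)=\kappa\prod_{k=1}^{m}\langle\omega_{i_{k}},\xi_{y}\rangle=\kappa\,y^{\alpha},
\]
where $\alpha=(\alpha_{1},\dots,\alpha_{n})$ is the multi-index of multiplicities $\alpha_{j}=\#\{k:i_{k}=j\}$ (so $|\alpha|=m$) and $\kappa\neq 0$ is a fixed constant (namely $m!$ or $1$, according to the normalization convention for $\odot$), the same for every index tuple.

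Then, given a linear relation $\sum_{I}c_{I}\,\omega_{i_{1}}\odot\cdots\odot\omega_{i_{m}}=0$ (the sum running over the $\binom{n+m-1}{m}$ distinct symmetric products, which are naturally indexed by the multisets $I=\{i_{1}\le\cdots\le i_{m}\}$, equivalently by multi-indices $\alpha$ with $|\alpha|=m$), I would apply $F\mapsto P_{F}$ and divide by $\kappa$ to obtain $\sum_{\alpha}c_{\alpha}\,y^{\alpha}\equiv 0$ on $\Rb^{n}$. Distinct multisets give distinct monomials $y^{\alpha}$, and a polynomial vanishing on all of $\Rb^{n}$ has all its coefficients zero; hence every $c_{\alpha}=0$. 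Since the $\binom{n+m-1}{m}$ multisets index all the pairwise distinct symmetric products $\omega_{i_{1}}\odot\cdots\odot\omega_{i_{m}}$, this is exactly the claimed linear independence, and the counting of multisets also accounts for the stated number $\binom{n+m-1}{m}$ of tensors.

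An equivalent route, if one prefers, is to reduce to the standard basis: choosing the invertible $T$ with $Te_{j}=\omega_{j}$, the $m$-th symmetric power $S^{m}(T)$ is invertible and carries $e_{i_{1}}\odot\cdots\odot e_{i_{m}}$ to $\omega_{i_{1}}\odot\cdots\odot\omega_{i_{m}}$, so it preserves linear independence, and one is left with the standard fact that $\{e_{i_{1}}\odot\cdots\odot e_{i_{m}}\}$ is a basis of $S^{m}(\Rb^{n})$. There is essentially no obstacle in either approach; the only points deserving care are that the linear functionals $\langle\omega_{j},\cdot\rangle$ form a coordinate system on $(\Rb^{n})^{*}$ precisely because $\{\omega_{j}\}$ is a basis (so that $\xi_{y}$ sweeps out all covectors as $y$ ranges over $\Rb^{n}$), and the bookkeeping of the harmless constant $\kappa$. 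I would also point out that, in contrast to the preceding lemma, one cannot simply symmetrize the tensor-product statement, since the symmetrization operator has a nontrivial kernel — which is exactly why the polynomial-evaluation argument is the natural one here.
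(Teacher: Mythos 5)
Your proof is correct, but it takes a different route from the paper. The paper expands each symmetric product as $\omega_{i_{1}}\odot\cdots\odot\omega_{i_{m}}=\frac{1}{m!}\sum_{\sigma\in S_{m}}\omega_{i_{\sigma(1)}}\otimes\cdots\otimes\omega_{i_{\sigma(m)}}$ and observes that a linear relation among the $\odot$-products becomes a linear relation among the ordered $\otimes$-products, which are linearly independent by the preceding lemma; since distinct multisets contribute to disjoint sets of ordered tuples, each original coefficient (times a nonzero combinatorial factor) must vanish. You instead pass to the dual basis and evaluate against $\xi_{y}^{m}$, turning each $\omega_{i_{1}}\odot\cdots\odot\omega_{i_{m}}$ into the monomial $\kappa\,y^{\alpha}$, so the statement reduces to the linear independence of distinct monomials; your alternative via the invertible map $S^{m}(T)$ carrying the standard symmetric basis to the $\omega$-products is equally valid. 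Your argument is self-contained and arguably more elementary, since it does not need the induction establishing independence of the $n^{m}$ ordered tensor products, while the paper's version reuses machinery (Lemma \ref{1.1} and its corollary) already set up for other purposes. One small caveat: your closing remark that ``one cannot simply symmetrize the tensor-product statement'' slightly mischaracterizes the obstruction --- it is true that the symmetrization operator has a nontrivial kernel, so images of independent sets need not stay independent in general, but the paper does not argue that way; it expands a putative relation among the $\odot$-products into one among the $\otimes$-products, which is a legitimate reduction, so both proofs stand.
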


\bpr
We have 
\[
\omega_{i_{1}}\odot \cdots \odot \omega_{i_{m}}=\frac{1}{m!}\sum\limits_{\sigma\in S_{m}} \omega_{i_{\sigma(1)}}\otimes \cdots \otimes \omega_{i_{\sigma(m)}}.
\]
A linear combination of the $m$-tensors $\omega_{i_{1}}\odot \cdots\odot \omega_{i_{m}}$ is a linear combination of the tensors $\omega_{i_{\sigma(1)}}\otimes \cdots \otimes \omega_{i_{\sigma(m)}}$. Now the result follows from the previous lemma.
\epr

\begin{proof}[Proof of Proposition \ref{Ellipticity Proposition}]
	A symmetric $m$-tensor field in $\mathbb{R}^n$ has $n+m-1\choose m$ distinct components. Therefore we need ${n+m-1}\choose m$ conditions in order to determine a symmetric $m$-tensor field uniquely in $\mathbb{R}^n$. From $A_{0}f=0$, we have 
	\begin{align*}
		\sum_q  \frac{2\pi\omega^{i_1}(t_q)\cdots\omega^{i_m}(t_q)\omega^{j_1}(t_q)\cdots\omega^{j_m}(t_q)f_{i_1\dots i_m}}{|\xi||(\g^\prime(t_q(\xi_0))\cdot\xi_0)(\g(t_q(\xi_0))- x_0)|}&=0
		\end{align*}
		Multiplying by $f_{j_{1}\cdots j_{m}}$ and adding, we get
		\begin{align*}
		\sum_q  \frac{2\pi\omega^{i_1}(t_q)\cdots\omega^{i_m}(t_q)\omega^{j_1}(t_q)\cdots\omega^{j_m}(t_q)f_{i_1\dots i_m}f_{j_1\dots j_m}}{|\xi||(\g^\prime(t_q(\xi_0))\cdot\xi_0)(\g(t_q(\xi_0))- x_0)|}&=0.
		\end{align*}
		This gives
		\begin{align*}
		\sum_q  \frac{2\pi(\omega^{i_1}(t_q)\cdots\omega^{i_m}(t_q)f_{i_1\dots i_m})^2}{|\xi||(\g^\prime(t_q(\xi_0))\cdot\xi_0)(\g(t_q(\xi_0))- x_0)|}&=0.
	\end{align*}
	Hence 
	\begin{align}\label{eq:conditions due to symbol}
		\omega^{i_1}(t_q)\cdots\omega^{i_m}(t_q)f_{i_1\dots i_m}=0, \text{ for } q= 1,\dots, {n+m-2\choose m}.
	\end{align} 
Using the fact that 
	\begin{align}\label{eq:conditions due to solenoidal in ndim}
		\xi^{i_m}f_{i_1\dots i_m}=0,
	\end{align} 
	we have that 
	\begin{align}\label{eq:modified conditions due to solenoidal}
		\omega^{i_1}(t_{j_1})\dots\omega^{i_{m-1}}(t_{j_{m-1}})
		\xi^{i_m}f_{i_1\dots i_m}=0
	\end{align}
	for $1\leq j_1, \cdots, j_{m-1} \leq n$ with $\omega(t_{n})=\xi$. Since $f_{i_{1}\cdots i_{m}}$ is symmetric, \eqref{eq:modified conditions due to solenoidal} can be written as 
	\[
	\lb \omega(t_{j_{1}})\odot \cdots \odot \omega(t_{j_{m-1}})\odot \xi \rb^{i_{1}\cdots i_{m}} f_{i_{1}\cdots i_{m}}=0.
	\]
We now define the following ${n+m-1\choose m} \times {{n+m-1}\choose m}$ matrix: 
	\[A =\begin{pmatrix}
	\omega(t_1)^{\odot m}, & \cdots & \omega(t_{L(n,m)})^{\odot m}, & \omega(t_{j_1})\odot\dots \odot \omega(t_{j_{m-1}})\odot\xi
	\end{pmatrix}
\]
\
for 	$1\leq j_1,\cdots, j_{m-1} \leq n$.

Then \eqref{eq:conditions due to symbol} and \eqref{eq:conditions due to solenoidal in ndim} together can be written as 
	\begin{align}\label{eq:combined conditions}
		A^t f =0.
	\end{align}
	Showing $f=0$ is equivalent to proving that  $A$ has full rank. Suppose that a linear combination of the columns of $A$ is equal to $0$: 
	\begin{align}\label{eq:linear combination of columns of A}
		\sum_{i=1}^{L(n,m)}c_i\omega(t_i)^{\odot m}+\sum_{1\leq j_1, \cdots, j_{m-1} \leq n} c_{j_1\dots j_{m-1}}\omega(t_{j_1})\odot\cdots \odot \omega(t_{j_{m-1}})\odot \xi =0.
	\end{align}
	Note that in \eqref{eq:linear combination of columns of A}, we make the assumption that $\omega(t_{n})=\xi$. Now 
	\begin{align*}
		&c_1\omega(t_1)^{\odot m} +\dots + c_{n-1}\omega(t_{n-1})^{\odot m}+
		\sum_{i=n}^{L(n,m)}c_i\left(\sum_{j=1}^{n-1}\alpha_{ij}\omega(t_j)\right)^{\odot m}\\ 
		&\quad \quad \quad \quad \quad \quad \quad \quad \quad \quad+\sum_{1\leq j_1, \cdots, j_{m-1} \leq n} c_{j_1\dots j_{m-1}}\omega(t_{j_1})\odot\cdots \odot \omega(t_{j_{m-1}})\odot \xi =0.
		\end{align*}
		We can write the above equation as 
		\begin{align*}
		&\sum_{1\leq j_1, \cdots, j_{m} \leq n-1} d_{j_1\dots j_{m-1}}\omega(t_{j_1})\odot\dots \odot \omega(t_{j_{m}}) \\ 
		&\quad \quad \quad \quad \quad \quad \quad \quad \quad \quad +
		\sum_{1\leq j_1, \cdots, j_{m-1} \leq n} c_{j_1\dots j_{m-1}}\omega(t_{j_1})\odot\cdots \odot \omega(t_{j_{m-1}})\odot \xi =0.
	\end{align*}
	Now from Lemma \ref{lemma: multinomial independence for symmetric product}, we have that $c_{j_1\dots j_{m-1}}$ and $d_{j_1\dots j_{m-1}}$ are $0$. Letting  $c_{j_1\dots j_{m-1}}=0$ in  \eqref{eq:linear combination of columns of A}, we get
	$$	\sum_{i=1}^{L(n,m)}c_i\omega(t_i)^{\odot m} =0.$$
	Now by the Kirillov-Tuy condition, each $c_{i}=0$. Hence we have that the matrix $A$ has full rank. Finally, we have that $f=0$. The proof of Proposition \ref{Ellipticity Proposition} is complete.
	\epr

%%%%%%%%%%%%%%%%%%%%%%%%%%%%%%
\section{Microlocal Inversion}\label{sect:microlocal_inversion}
In this section, we will construct a microlocal parametrix for the operator $\Rc_{\g}^{*}\Rc_{\g}$. This will complete the proof of Theorem \ref{Main theorem}.

\bpr[Proof of Theorem \ref{Main theorem}]

Using a lexicographic ordering, let us view the entries of the symmetric $m$-tensor $\sqrt{\frac{2\pi}{|\xi||\g'(t_{s}(\xi_{0})\cdot \xi_{0}(\g(t_{s}(\xi_{0})-x_{0})|}}\omega(t_{s})^{\odot m}$ as a column vector $v_s$, and with that define the matrix  
\[ 
V=\begin{pmatrix}
v_{1}& \cdots &v_{L(n,m)}&\cdots v_{N}
\end{pmatrix}.
\]
Here $N$ is the number of intersection points of the plane $x_{0}+\xi^{\perp}$ with the curve $\g$. 

We then have that 
\[
A_{0}(x,\xi)= V V^{t}.
\]

Again using a lexicographic ordering, $A_{0}$ can be  viewed as a matrix. The entries of $V$ are real, and since the rank of $V$ is $L(n,m)$, the rank of $A_{0}$ is $L(n,m)$ as well.  Since $A_0$ is symmetric, using the spectral decomposition theorem, we have that $A_0=P DP^{t}$, where $P$ is the orthogonal matrix comprising the eigenvectors of $A_0$ and $D$ is the diagonal matrix of eigenvalues of $A_0$. %The rank of $V$ is $L(n,m)$ and therefore the rank of $A_{0}$ is $L(n,m)$ as well. 
This implies that there are $L(n,m)$ non-zero entries in the diagonal matrix $D$. Now let  $D^{-}$ be the diagonal matrix obtained by replacing the $L(n,m)$ nonzero entries of $D$  by its reciprocals and define $B_0(x,\xi) = \sigma(\mathcal{S})PD^{-}P^t$ where $\mathcal{S}$ is solenoidal projection of a symmetric tensor field, and $\sigma(\Sc)$ is its principal symbol. 

Next, we have 
\begin{align*}
	B_0(x,\xi)A_0(x,\xi)&= \sigma(\mathcal{S})P \begin{pmatrix}
		I_{L(n,m)} & 0 \\
		0 &  0\\ 
	\end{pmatrix} P^t\\
	&= \sigma(\mathcal{S}).
\end{align*}
In the last line we used the fact that all the columns in  $P$ after $L(n,m)$-th column are the eigenvectors corresponding to the $0$ eigenvalue. Thus $ B_0(x,\xi)A_0(x,\xi)$ equals $\sigma(\mathcal{S})$. Since the singularities of $P$ and $D^{-}$ are possibly only on $\Sigma$, we have that the  entries of $B_0(x,\xi)$ lie in the symbol class $I(\Delta,\Lambda)$. Let $\mathcal{B}_0$ be the operator with symbol matrix given by 
$$b_0(x,\xi) = \left \{
\begin{array}{ll}
B_0(x,\xi)  & \mbox{ if }(x,\xi)\in \Xi_0 \\
0 & \mbox{ otherwise. }
\end{array}
\right. $$
We have that the solenoidal projection $\Sc$ is of order $0$ and $\mathcal{R}_{\g}^{*}\mathcal{R}_{\g} \in I^{-1,0}(\Delta,\Lambda)$ \cite{Greenleaf-Uhlmann-Duke1989}. The principal symbol of $\Bc_{0}\Rc^{*}_{\g}\Rc_{\g}$ is $\sigma(\Sc)$. Therefore $\Bc_{0}\Rc^{*}_{\g}\Rc_{\g}\in I^{-\frac{1}{2}, \frac{1}{2}}$. Indeed by the properties of $I^{p,l}$ classes, on $\Delta\setminus \Sigma$, an $I^{p,l}$ distribution is a pseudodifferential operator of order $p+l$. Now since $\Sc$ is of order $0$, we have that $p+l=0$. Also we know from \cite{Guillemin-Uhlmann} that the principal symbol of an $I^{p,l}$  distribution has a singularity along $\Sigma$ as a conormal distribution of order $l-k/2$. In our case $k=1$, and away from $\Delta\setminus \Sigma$, $\Sc$ is a classical pseudodifferential operator. Therefore $l=\frac{1}{2}$. Now $p=-\frac{1}{2}$. %Using the composition calculus of Antoniano and Uhlmann \cite{AntonianoandUhlmann}, we prove that the entries of $\mathcal{B}_0\mathcal{R}_{\g}^{*}\mathcal{R}_{\g}$ lie in $I^{-\frac{1}{2},\frac{1}{2}}(\Delta,\Lambda)$. 

Using the symbol calculus for $I^{p,l}(\Delta,\Lambda)$, we can decompose $R_1 = \mathcal{B}_0\mathcal{R}_{\g}^{*}\mathcal{R}_{\g}-\mathcal{S}$  as $R_{11}+R_{12}$ where as $R_{11}\in I^{-\frac{3}{2},\frac{1}{2}}$ and $R_{12} \in I^{-\frac{1}{2},-\frac{1}{2}}$.

Now let $\Pc$ denote the potential projection of a symmetric $m$-tensor field. We have the following lemma.
\begin{lemma}\label{Principal symbol remainders}
	The principal symbols of $R_{1j}\circ \Pc$ as $I^{-\frac{3}{2},\frac{1}{2}}$ and $I^{-\frac{1}{2},-\frac{1}{2}}$ distributions respectively for $j= 1,2$ are $0$.
\end{lemma}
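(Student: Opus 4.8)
The plan is to exploit that the restricted ray transform $\Rc_{\g}$ annihilates potential tensor fields, so that $R_1$ is in fact identically zero after composition with the potential projection $\Pc$; the vanishing of the principal symbols will then follow from the way $R_1$ splits as $R_{11}+R_{12}$. First I would record the algebraic fact that drives everything: $\Pc f$ is a potential field, i.e.\ of the form $dv$ for a symmetric $(m-1)$-tensor field $v$ decaying at infinity (for $f$ compactly supported, $v$ solves an elliptic system and decays), and along every line $\langle (dv)(x+t\om),\om^{m}\rangle=\tfrac{d}{dt}\langle v(x+t\om),\om^{m-1}\rangle$, so that $\Rc(dv)=0$ by the fundamental theorem of calculus, and a fortiori $\Rc_{\g}(dv)=0$. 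Hence $\Rc_{\g}\circ\Pc=0$, so $\Rc_{\g}^{*}\Rc_{\g}\circ\Pc=0$ and therefore $\Bc_0\Rc_{\g}^{*}\Rc_{\g}\circ\Pc=0$. Since $\Sc$ and $\Pc$ are complementary exact projections, $\Sc\circ\Pc=0$, and combining these yields
\[
R_1\circ\Pc=\big(\Bc_0\Rc_{\g}^{*}\Rc_{\g}-\Sc\big)\circ\Pc=0 .
\]

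Next, since $\Pc$ is a classical pseudodifferential operator of order $0$, composing the decomposition $R_1=R_{11}+R_{12}$ on the right by $\Pc$ keeps $R_{11}\circ\Pc\in I^{-3/2,1/2}(\Delta,\Lambda)$ and $R_{12}\circ\Pc\in I^{-1/2,-1/2}(\Delta,\Lambda)$, and by the composition calculus (the Antoniano--Uhlmann theorem for the flowout $\Lambda$ together with standard pseudodifferential composition) the principal symbol of $R_{1j}\circ\Pc$ equals that of $R_{1j}$ composed with the symbol of $\Pc$. From $R_{11}\circ\Pc+R_{12}\circ\Pc=0$ I would then extract the vanishing of each individual principal symbol by separating the two symbol channels: on $\Lambda\setminus\Delta$ the operator $R_{11}\circ\Pc$ is a Fourier integral operator of order $-3/2$ while $R_{12}\circ\Pc$ has order $-1/2$, so the top-order symbol there is carried entirely by $R_{12}\circ\Pc$ and must vanish, after which $R_{11}\circ\Pc=-R_{12}\circ\Pc$ forces the remaining symbol data on $\Lambda\setminus\Delta$ to vanish as well; on $\Delta\setminus\Sigma$ the operators $\Bc_0\Rc_{\g}^{*}\Rc_{\g}$ and $\Sc$ share the same principal symbol by the very construction of $\Bc_0$, so $R_1$ is one order lower there and its symbol annihilates that of $\Pc$, while the two pieces are distinguished by their conormal order along $\Sigma$ (namely $l-\tfrac12$ for an $I^{p,l}$ distribution, hence $0$ for the $I^{-3/2,1/2}$ piece and $-1$ for the $I^{-1/2,-1/2}$ piece), which separates their contributions and gives the claim for $j=1,2$.

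The conceptual content — that $\Rc_{\g}$, $\Rc_{\g}^{*}\Rc_{\g}$ and $\Sc$ all kill potential fields — is immediate, and the equality $R_1\circ\Pc=0$ is essentially a one-line observation. The hard part, and the place I expect the only real obstacle, is the last step: one must check that composition with the order-$0$ projection $\Pc$ stays within the relevant $I^{p,l}$ classes with the expected symbol behavior, and that the splitting $R_1=R_{11}+R_{12}$ is normalized so that the Fourier-integral behavior on $\Lambda\setminus\Delta$ sits in $R_{12}$ and the leading conormal behavior along $\Sigma$ sits in $R_{11}$ — which is precisely what allows one to conclude that each summand, not merely their sum, has vanishing principal symbol.
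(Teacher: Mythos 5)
Your first half is exactly the paper's argument: $\Rc_{\g}\circ\Pc=0$ (the ray transform kills potential fields) and $\Sc\circ\Pc=0$ give $R_{1}\circ\Pc=(\Bc_{0}\Rc_{\g}^{*}\Rc_{\g}-\Sc)\circ\Pc=0$, and composition with the order-zero $\Psi$DO $\Pc$ stays in the stated $I^{p,l}$ classes. The problem is your last step. The principal symbol of an $I^{p,l}(\Delta,\Lambda)$ distribution in the sense used here (and in the way the lemma is applied afterwards, where $\sigma_{0}(R_{1j})=r_{1j}A_{0}$) lives on $\Delta\setminus\Sigma$; there $R_{11}\circ\Pc\in I^{-3/2,1/2}$ and $R_{12}\circ\Pc\in I^{-1/2,-1/2}$ are both pseudodifferential of the \emph{same} order $p+l=-1$, so the identity $R_{11}\circ\Pc+R_{12}\circ\Pc=0$ is, by itself, perfectly consistent with two nonzero, mutually cancelling order $-1$ symbols. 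Your proposed way of ruling this out --- ``separating the two symbol channels'' by the FIO order on $\Lambda\setminus\Delta$ and by the conormal order $l-\tfrac12$ along $\Sigma$ --- does not work: the conormal orders along $\Sigma$ form a nested filtration, not a direct-sum grading, so a symbol admissible at conormal order $0$ may equally well have conormal order $-1$, and ``different orders'' therefore do not separate the two contributions; likewise the order count on $\Lambda\setminus\Delta$ only constrains the symbol on $\Lambda$ (and even there it gives equality up to sign, not vanishing), which is irrelevant to the symbol on $\Delta\setminus\Sigma$ that the lemma asserts is zero. There is also no ``normalization'' of the splitting $R_{1}=R_{11}+R_{12}$ putting all the $\Lambda$-behavior in one piece and all the $\Sigma$-conormal behavior in the other; the decomposition is not canonical in that way.

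The paper closes this gap differently: from $R_{11}\circ\Pc=-R_{12}\circ\Pc$ it regards the two sides as a \emph{single} distribution lying in both $I^{-3/2,1/2}$ and $I^{-1/2,-1/2}$, and uses that this intersection sits inside $I^{-3/2,-1/2}$. An element of $I^{-3/2,-1/2}$ is pseudodifferential of order $p+l=-2$ on $\Delta\setminus\Sigma$, so its order $-1$ symbol vanishes; read in the class $I^{-3/2,1/2}$ this says $\sigma_{0}(R_{11}\circ\Pc)=0$, and read in $I^{-1/2,-1/2}$ it says $\sigma_{0}(R_{12}\circ\Pc)=0$. So the missing ingredient in your argument is precisely this intersection property of the paired-Lagrangian classes (or some equivalent statement forcing the common distribution to drop one full order on $\Delta\setminus\Sigma$); without it, the vanishing of each individual principal symbol does not follow from $R_{1}\circ\Pc=0$.
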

\begin{proof} We first consider $R_{1}\circ \Pc$. We have 
	\begin{align*}
		\lb R_1\circ \Pc \rb  f &= \lb (\mathcal{B}_0\mathcal{R}_{\g}^{*}\mathcal{R}_{\g}-
		\mathcal{S})\circ \Pc \rb f =0.
	\end{align*}
	This follows from the fact that the solenoidal projection of a potential field is $0$ and $\lb\Rc_{\g}\circ \Pc\rb  f=0$. Now 
		\[
		\lb R_{1}\circ \Pc \rb f= \lb (R_{11}+R_{12})\circ \Pc \rb f=0.
		\]
		From this, we have
		\[
		\lb R_{11}\circ \Pc \rb f = -\lb R_{12}\circ \Pc \rb f.
	\]
	This then implies that $R_{11}\circ \Pc$ and $R_{12}\circ \Pc$ both belong to $I^{-\frac{3}{2},-\frac{1}{2}}$. Therefore the principal symbol of $R_{11}\circ \Pc$ as an $I^{-\frac{3}{2},\frac{1}{2}}$ distribution is $0$ and that of $R_{12}\circ \Pc$ as an $I^{-\frac{1}{2},-\frac{1}{2}}$ distribution is $0$ as well.

\end{proof}

Let $\NS(A), \RS(A)$ and $\CS(A)$  denote the null space, row space and column space, respectively of a matrix $A$. We have the following standard result from linear algebra.
\begin{lemma}\label{row space lemma}
	If $A$ and $B$ are two matrices of the same order such that $\NS(A)\subset \NS(B)$, then $\RS(B)\subset \RS(A)$. 
\end{lemma}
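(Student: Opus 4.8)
The plan is to exploit the standard orthogonality relation between the row space and the null space of a real matrix. Write $A$ and $B$ as $m\times n$ matrices, so that their rows are vectors in $\Rb^{n}$ and both $\NS(\cdot)$ and $\RS(\cdot)$ are linear subspaces of $\Rb^{n}$. The key observation is that for any such matrix $M$ one has $\NS(M)=\RS(M)^{\perp}$, where $\perp$ denotes the orthogonal complement inside $\Rb^{n}$: a vector $x$ satisfies $Mx=0$ precisely when $x$ is orthogonal to each row of $M$, hence to the span of the rows, which is $\RS(M)$.

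Granting this, the proof is a one-line consequence. Passing to orthogonal complements reverses inclusions, so $\NS(A)\subseteq \NS(B)$ gives $\NS(B)^{\perp}\subseteq \NS(A)^{\perp}$; and since $\RS(M)=\NS(M)^{\perp}$ and $(\,\cdot\,)^{\perp\perp}$ is the identity on subspaces of $\Rb^{n}$, this reads $\RS(B)\subseteq \RS(A)$, as desired.

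If one prefers to avoid invoking this identity as a black box, here is the same argument unpacked directly. Let $b^{T}$ denote an arbitrary row of $B$, and let $x\in \NS(A)$. By hypothesis $x\in \NS(B)$, so $Bx=0$ and in particular $b^{T}x=0$. Thus every row of $B$ is orthogonal to every element of $\NS(A)$, i.e. each row of $B$ lies in $\NS(A)^{\perp}=\RS(A)$. Since $\RS(B)$ is spanned by the rows of $B$ and $\RS(A)$ is a linear subspace, it follows that $\RS(B)\subseteq \RS(A)$.

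This is a routine fact from linear algebra and presents no genuine obstacle; the only point worth keeping in mind is that we work over $\Rb$, so that orthogonal complements behave as expected and the involution property of $(\,\cdot\,)^{\perp}$ on finite-dimensional subspaces is available.
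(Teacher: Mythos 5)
Your proof is correct. Note that the paper itself offers no proof of this lemma at all: it is stated as ``the following standard result from linear algebra'' and simply invoked, so there is no argument in the paper to compare against; your write-up fills that (intentional) gap with the standard argument via the identity $\NS(M)=\RS(M)^{\perp}$ and the inclusion-reversing, involutive behaviour of orthogonal complements on subspaces of $\Rb^{n}$. Both your one-line version and the unpacked version (every row of $B$ annihilates $\NS(A)$, hence lies in $\NS(A)^{\perp}=\RS(A)$) are sound. Your caveat about working over $\Rb$ is harmless here, since the symbol matrices to which the paper applies the lemma have real entries; if one wanted the statement over an arbitrary field, the same argument goes through with the orthogonal complement replaced by the annihilator (each row of $B$ is a functional vanishing on $\NS(A)$, and the space of such functionals is exactly $\RS(A)$ by rank--nullity), but this generality is not needed for the application.
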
 
Now from Lemma \ref{Principal symbol remainders}, we have that $\NS\lb\sigma_{0}(\mathcal{S})\rb\subset 
\NS\lb \sigma_{0}(R_{1j}\rb)$ for $j=1,2$.  Then by Lemma \ref{row space lemma}, we have that $\RS\lb\sigma_0(R_{1j})\rb\subset \RS\lb \sigma_{0}(\Sc)\rb$.  is contained in row space of $\sigma(\mathcal{S})$ for $j=1,2$. Now by Proposition \ref{Ellipticity Proposition}, we have that $\RS(\sigma_{0}(\Sc))=\RS(A_{0})$. 
Hence there exist matrices $r_{1j}$ such that $\sigma_0(R_{1j}) = r_{1j}A_0$ for $j =1,2$. 

Let $\Bc_{11}$ and $\Bc_{12}$ be operators having symbols $-r_{11}$ and $-r_{12}$ respectively. Let $\Bc_{1}=\Bc_{11}+\Bc_{12}$, and define $R_{2}=(\Bc_0+\Bc_1)\Rc_{\g}^{*}\Rc_{\g}-\Sc$. We can rewrite $R_{2}$ as 
\begin{align*}
R_{2}&=\Bc_{11}\Rc_{\g}^{*}\Rc_{\g}+\Bc_{12}\Rc_{\g}^{*}\Rc_{\g} +\Bc_0 \Rc_{\g}^{*}\Rc_{\g}-\Sc\\
&=\Bc_{11}\Rc_{\g}^{*}\Rc_{\g}+R_{11}+\Bc_{12}\Rc_{\g}^{*}\Rc_{\g}+R_{12}.
\end{align*}

Let us denote $K_1=\Bc_{11}\Rc_{\g}^{*}\Rc_{\g}+R_{11}$ and $K_2=\Bc_{12}\Rc_{\g}^{*}\Rc_{\g}+R_{12}$. We have $K_1\in I^{-\frac{3}{2},\frac{1}{2}}$ and $K_2\in I^{-\frac{1}{2},-\frac{1}{2}}$. By construction, the principal symbols of $K_1$ and $K_2$ are $0$. Hence using the symbol calculus, we decompose 
\begin{align*}
& K_1=K_{11}+K_{12}, \quad K_{11}\in I^{-\frac{5}{2},\frac{1}{2}}, K_{12}\in I^{-\frac{3}{2},-\frac{1}{2}}\\
&K_2=K_{21}+K_{22}, \quad K_{21}\in I^{-\frac{3}{2},-\frac{1}{2}}, K_{12}\in I^{-\frac{1}{2},-\frac{3}{2}}.
\end{align*}
With this, we can write 
\[
R_2=\underbrace{K_{11}}_{R_{20}}+\underbrace{K_{12}+K_{21}}_{R_{21}}+
	\underbrace{K_{22}}_{R_{22}},
\]
where $R_{20}\in I^{_\frac{5}{2}}$, $R_{21}\in I^{-\frac{3}{2},-\frac{1}{2}}$ and $R_{22}\in I^{-\frac{1}{2},-\frac{3}{2}}$. Therefore 
\[
R_{2}\in \sum\limits_{j=0}^{2} I^{-\frac{1}{2}-2+j,\frac{1}{2}-j}.
\] 
Proceeding recursively, we get a sequence of operators $$ R_N \in \sum_{j=o}^N I^{-\frac{1}{2}-N+j,\frac{1}{2}-j}.$$
We write this as 
\[
R_N\in \sum_{j=0}^{[\frac{N}{2}]}I^{-\frac{1}{2}-N+j,\frac{1}{2}-j}+ 
\sum_{j=[\frac{N}{2}]+1}^NI^{-\frac{1}{2}-N+j,\frac{1}{2}-j}.
\]

In the first term above, since $-\frac{1}{2}-N+j\leq -\frac{1}{2}-N+[\frac{N}{2}]$ and $\frac{1}{2}-j\leq \frac{1}{2}$, we have that first term belongs to $I^{-\frac{1}{2}-N+[\frac{N}{2}],\frac{1}{2}}$. Letting $N\to \infty$, we have that this term is smoothing, since  $\cap_{p} I^{p,l}\subset C^{\infty}$. For the second term, we use the fact that for $[\frac{N}{2}]+1 \leq j\leq N$, 
$-\frac{1}{2}-N+j\leq -\frac{1}{2}$ and $\frac{1}{2}-j\leq -\frac{1}{2}-[\frac{N}{2}]$. Again letting $N\to \infty$, since $\cap_{l} I^{p,l}(\Delta,\Lambda)\subset I^{p}(\Lambda)$, we have that the second as $N\to \infty$ is an operator in $I^{-\frac{1}{2}}(\Lambda)$ which we denote by $\Ac$. Therefore defining an operator $\Bc=\mathcal{B}_0+\mathcal{B}_1+\cdots$, we have for each $x\in \pi_{1}(K)$,
\[
 \mathcal{B}\mathcal{R}_{\g}^{*}\mathcal{R}_{\g}f(x) = f^{s}(x)+\mathcal{A}f(x)+\mbox{smoothing terms}.
\]
This completes the proof of Theorem \ref{Main theorem}.
\epr

%%%%%%%%%%%%%%%%%%%%%%%%%
\section{Some examples}\label{sect:examples}
In this final section, we justify the remarks that we made in Section \ref{sect:Statement_main_result} about the Kirillov-Tuy condition.
\subsection{Case $n-1=2$}
	\begin{prop}
		If $\omega_{1}, \cdots , \omega_{m+1}$  are $(m+1)$-vectors contained in a plane of $\Rb^3$ such  that any two of them are linearly independent.  Then 
		the  following matrix has full rank: $$ A_m =\begin{pmatrix}
		\omega_{1}^{\odot m} & . & . & . & \omega_{m+1}^{\odot m}
		\end{pmatrix}.$$
	\end{prop}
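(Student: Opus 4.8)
The plan is to trivialize the geometry and reduce the whole statement to the nonvanishing of a Vandermonde-type determinant. Fix an orthonormal basis $\{e_1,e_2\}$ of the plane containing the $\omega_i$ and complete it to an orthonormal basis $\{e_1,e_2,e_3\}$ of $\Rb^3$; in these coordinates $\omega_i=a_ie_1+b_ie_2$ for scalars $a_i,b_i$. Passing to this basis replaces $A_m$ by $R^{\odot m}A_m$, where $R^{\odot m}$ is the invertible map induced on symmetric $m$-tensors by the rotation $R$, so the rank is unchanged and pairwise independence is preserved. A distinct component of $\omega_i^{\odot m}$ is indexed by a triple $(p,q,r)$ with $p+q+r=m$ and equals $a_i^{\,p}b_i^{\,q}0^{\,r}$, which vanishes unless $r=0$. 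Hence every row of $A_m$ except the $m+1$ rows indexed by $(p,q,0)$ with $p+q=m$ is identically zero, and on those rows $A_m$ restricts to the $(m+1)\times(m+1)$ matrix $M=\big(a_i^{\,m-k}b_i^{\,k}\big)_{0\le k\le m,\ 1\le i\le m+1}$. Thus $\operatorname{rank}A_m=\operatorname{rank}M$, and it suffices to show $\det M\neq 0$.

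Assume first that $a_i\neq 0$ for all $i$. Dividing the $i$-th column of $M$ by $a_i^m$ and setting $t_i=b_i/a_i$ turns $M$ into the Vandermonde matrix $\big(t_i^{\,k}\big)_{0\le k\le m,\ 1\le i\le m+1}$, whose determinant is $\pm\prod_{i<j}(t_j-t_i)$. Since $t_j-t_i=(a_ib_j-a_jb_i)/(a_ia_j)$ and the product $\prod_{i<j}a_ia_j$ reassembles into $\prod_i a_i^m$, the pulled-out factor cancels and we get the clean identity $\det M=\pm\prod_{1\le i<j\le m+1}(a_ib_j-a_jb_i)$. Each factor $a_ib_j-a_jb_i$ is, up to sign, the $2\times2$ determinant of the pair $\omega_i,\omega_j$, which is nonzero exactly because any two of the $\omega_i$ are linearly independent; hence $\det M\neq 0$. (Alternatively, with no case distinction: $\det M$ is a polynomial, homogeneous of degree $m$ in each pair $(a_i,b_i)$, that vanishes whenever two columns are proportional, so it is divisible by every $a_ib_j-a_jb_i$; comparing degrees forces $\det M=c\prod_{i<j}(a_ib_j-a_jb_i)$, and $c=\pm1\neq0$ because the choice $\omega_i=e_1+ie_2$ yields a nonzero Vandermonde determinant.)

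Finally, if some $\omega_i$ is parallel to $e_2$, i.e.\ $a_i=0$, pairwise independence forces this for at most one index, say $i=m+1$. After scaling that column by $1/b_{m+1}^{\,m}$ it becomes $(0,\dots,0,1)^{\mathrm t}$, while the first $m$ columns still have nonzero first coordinate; expanding $\det M$ along the last column reduces it to the $m\times m$ Vandermonde determinant in the distinct numbers $t_1,\dots,t_m$, which is nonzero. Therefore $\det M\neq 0$ in every case, $A_m$ has full rank $m+1$, and the proof is complete. The only real work is bookkeeping — correctly matching $\omega^{\odot m}$ with the monomial vector and isolating the one possible vertical vector — and no genuine obstacle arises, since the statement collapses to the nonvanishing of a Vandermonde determinant.
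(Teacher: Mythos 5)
Your proof is correct, but it takes a genuinely different route from the one in the paper. The paper argues by induction on $m$ with the tensorized matrix $\wt{A}_m=(\omega_1^{\otimes m}\ \cdots\ \omega_{m+1}^{\otimes m})$: each $\omega_i$, $i\geq 3$, is written as $\alpha_i\omega_1+\beta_i\omega_2$, the relation $\sum_i c_i\omega_i^{\otimes(k+1)}=0$ is split via the tensor-independence lemma (Lemma \ref{1.1}) into two relations of one degree lower, and pairwise independence plus the induction hypothesis forces all $c_i=0$; this stays inside the tensor-algebra framework used in the ellipticity section. You instead choose coordinates adapted to the plane, note that every component of $\omega_i^{\odot m}$ involving the third coordinate vanishes, and reduce the rank question to the nonvanishing of the $(m+1)\times(m+1)$ generalized Vandermonde determinant $\det\bigl(a_i^{m-k}b_i^k\bigr)=\pm\prod_{i<j}(a_ib_j-a_jb_i)$, each factor being nonzero precisely by pairwise independence (your homogeneity argument even dispenses with the case distinction $a_i=0$, and the row rescaling by multinomial weights in the component representation is harmless for the rank). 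What your approach buys is an explicit closed-form determinant and a shorter, induction-free argument; what the paper's approach buys is uniformity with its other independence lemmas (Lemmas \ref{1.1}--\ref{lemma: multinomial independence for symmetric product}) and a formulation that does not rely on the two-dimensional Vandermonde structure, which is why it is phrased in terms of linear combinations of $\omega_1,\omega_2$ rather than coordinates.
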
 
	\begin{proof}	
Define $$ \tilde{A}_m =\begin{pmatrix}
		\omega_{1}^{\otimes m} & . & . & . & \omega_{m+1}^{\otimes m}
		\end{pmatrix}.$$ 
Observe $\tilde{A}_m$ has $A_m$  as $(m+1) \times (m+1)$ block (by doing some row operations). Therefore it is sufficient to prove that $\tilde{A}_m$ has full rank.
We prove this using induction on $m$. Clearly the result is true for $ m=1$. Let us assume that  the matrix $\tilde{A}_k$ (for $m=k$) has full rank, that is, the vectors 
	$$ \omega_{1}^{\otimes k} , \cdots, \omega_{k+1}^{\otimes k}$$
	are linearly independent.
	Using this, we want to prove that the vectors 
	$$ \omega_{1}^{\otimes (k+1)} , \cdots, \omega_{k+2}^{\otimes (k+1)}$$
	are also linearly independent.

	Consider
	\begin{align}\label{eq:11}
	\sum_{i=1}^{k+2} c_i\omega_{i}^{\otimes(k+1)} =0.
	\end{align} 
 Using the pairwise independent condition, we write each $\omega_{i} =\alpha_i\omega_{1}+\beta_i\omega_{2}$ for $i\geq 3$ with the conditions that $\alpha_i\neq 0 \neq \beta_i$ for any $i\geq 3$ and $ \alpha_i\beta_j-\alpha_j\beta_i\neq0$ for $i\neq j$. From \eqref{eq:11}, we have
	\begin{align*}
 %c_1 \omega(t_1)^{\otimes (k+1)}+ c_2 \omega(t_2)^{\otimes (k+1)} +\sum_{i=3}^{k+2} c_i w(t_i)^{\otimes k}\otimes(\alpha_i \omega(t_1) +\beta_i \omega(t_2)) &=0\\
	\left(c_1\omega_{1}^{\otimes k} +\sum_{i=3}^{k+2} \alpha_i c_i \omega_{i}^{\otimes k}\right)\otimes \omega_{1}+ \left(c_2\omega_{2}^{\otimes k} +\sum_{i=3}^{k+2} \beta_i c_i \omega_{i}^{\otimes k}\right)\otimes \omega_{2} &=0.
	\end{align*}
	Since $\omega_{1}$ and $\omega_{2}$ are independent, from Lemma \ref{1.1} , we get 
	\begin{align}\label{eq:12}
	c_1\omega_{1}^{\otimes k} +\sum_{i=3}^{k+2} \alpha_ic_i\omega_{i}^{\otimes k}=0,\\\label{eq:13}
	c_2\omega_{2}^{\otimes k} +\sum_{i=3}^{k+2} \beta_ic_i\omega_{i}^{\otimes k}=0.
	\end{align}
	If $c_{k+2} = 0$ then we are done and if $c_{k+2} \neq 0$ then we can use \eqref{eq:12} and \eqref{eq:13} to get the following:
\begin{align*}
\omega_{k+2}^{\otimes k} = -\frac{1}{\alpha_{k+2}c_{k+2}}\left( c_1 \omega_{1}^{\otimes k} + \sum_{i=3}^{k+1} \alpha_ic_i\omega_{i}^{\otimes k}\right)
\end{align*}
and 
\begin{align*}
\omega_{k+2}^{\otimes k} = -\frac{1}{\beta_{k+2}c_{k+2}}\left( c_2 \omega_{2}^{\otimes k} + \sum_{i=3}^{k+1} \beta_ic_i\omega_{i}^{\otimes k}\right).
\end{align*}
Equating the above two equations, we get 
\begin{align*}
%\frac{c_1}{\alpha_{k+2}}\omega(t_1)^{\otimes k} -\frac{c_2}{\beta_{k+2}}\omega(t_2)^{\otimes k} +\sum_{i=3}^{k+1} c_i\left(\frac{\alpha_i}{\alpha_{k+2}}-\frac{\beta_i}{\beta_{k+2}}\right) \omega(t_i)^{\otimes k} = 0\\
\frac{c_1}{\alpha_{k+2}}\omega_{1}^{\otimes k} -\frac{c_2}{\beta_{k+2}}\omega_{2}^{\otimes k} +\sum_{i=3}^{k+1} c_i\left(\frac{\alpha_i\beta_{k+2} -\beta_i\alpha_{k+2} }{\alpha_{k+2}\beta_{k+2}}\right) \omega_{i}^{\otimes k} = 0.
\end{align*} 
This implies $c_i = 0$ for $i=1, \dots , k+1$ because of the pairwise independent condition and the induction hypothesis. Letting  $c_i=0$ for $i= 1, \dots, k+1$ in equation \eqref{eq:12}, we have
$$ \alpha_{k+2}c_{k+2} \omega_{k+2}^{\otimes k} = 0,$$
which implies $c_{k+2}=0$ because $\alpha_{k+2} \neq 0$ and $\omega_{k+2}$ is a non-zero vector.

	\end{proof}
\subsection{Case $n-1=3$ and $m=2$}
In this case we consider a collection of six vectors $\omega_{1}, \omega_{2}, \cdots, \omega_{6}$ in $\Rb^{3}$ (since in the Kirillov-Tuy condition, the vectors are restricted to a hyperplane) with the condition that any three of them are independent. 
\begin{lemma}
	If $\omega_{1}, \omega_{2}, \cdots, \omega_{6}$ be six vectors in $\Rb^{3}$ such that any three of them are linearly independent. Then the rank of the following matrix 
	$$\wt{V}=  \begin{pmatrix}
	\omega_{1}^{\otimes 2} & \omega_{2}^{\otimes 2} & \cdots & \omega_{6}^{\otimes 2}
	\end{pmatrix}$$
	is $6$.
\end{lemma}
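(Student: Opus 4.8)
The plan is to recast the statement inside the $6$-dimensional space $\mathrm{Sym}^{2}(\Rb^{3})$ of symmetric $2$-tensors and then run an argument parallel to the one used above for $n-1=2$, peeling off tensor factors with Lemma~\ref{1.1}. Since each $\omega_{i}^{\otimes2}$ is a symmetric tensor, the column span of $\wt V$ lies in $\mathrm{Sym}^{2}(\Rb^{3})\cong\Rb^{6}$, so $\mathrm{rank}\,\wt V=6$ is equivalent to the linear independence of the six symmetric matrices $\omega_{i}\omega_{i}^{t}$. Identifying $\mathrm{Sym}^{2}(\Rb^{3})$ with quadratic forms, this is in turn equivalent to the nonexistence of a nonzero symmetric $3\times3$ matrix $B$ with $\omega_{i}^{t}B\,\omega_{i}=0$ for all $i$; geometrically, that the six points $[\omega_{i}]\in\Pb^{2}$ do not lie on a common conic.

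To attack this, note that by hypothesis $\omega_{1},\omega_{2},\omega_{3}$ form a basis of $\Rb^{3}$; write $\omega_{j}=\sum_{k=1}^{3}a_{jk}\omega_{k}$ for $j=4,5,6$ (and $a_{ik}=\delta_{ik}$ for $i\le3$). Inspecting the triples $\{\omega_{i},\omega_{j},\omega_{k}\}$ one checks that every $a_{jk}\ne0$ and that $(a_{jk})_{j,k=4,5,6}$ is invertible. Suppose $\sum_{i=1}^{6}c_{i}\,\omega_{i}^{\otimes2}=0$; expanding the $\omega_{j}$ in the basis and regrouping, this reads $\sum_{k=1}^{3}C_{k}\otimes\omega_{k}=0$ with $C_{k}=\sum_{l=1}^{3}\big(\sum_{i}c_{i}a_{ik}a_{il}\big)\omega_{l}$. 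Lemma~\ref{1.1} forces each $C_{k}=0$, and independence of $\omega_{1},\omega_{2},\omega_{3}$ then gives $\sum_{i}c_{i}a_{ik}a_{il}=0$ for all $k,l$, i.e.\ $\sum_{i}c_{i}\,a_{i}a_{i}^{t}=0$ with $a_{i}=(a_{i1},a_{i2},a_{i3})^{t}$. The three diagonal equations express $c_{1},c_{2},c_{3}$ through $c_{4},c_{5},c_{6}$, while the three off-diagonal equations form the homogeneous $3\times3$ system with coefficient matrix
\[
\begin{pmatrix}
a_{41}a_{42}&a_{51}a_{52}&a_{61}a_{62}\\
a_{41}a_{43}&a_{51}a_{53}&a_{61}a_{63}\\
a_{42}a_{43}&a_{52}a_{53}&a_{62}a_{63}
\end{pmatrix}.
\]
Pulling the nonzero factor $a_{j1}a_{j2}a_{j3}$ out of the $j$-th column, its determinant equals $\pm\big(\prod_{j=4}^{6}a_{j1}a_{j2}a_{j3}\big)$ times the determinant of the matrix whose columns are $(a_{j1}^{-1},a_{j2}^{-1},a_{j3}^{-1})^{t}$, $j=4,5,6$. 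Hence $\mathrm{rank}\,\wt V=6$ holds precisely when these three ``reciprocal'' vectors are linearly independent.

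The whole difficulty is now concentrated in that last implication, which I expect to be the genuine obstacle: one must deduce, from ``any three of the $\omega_{i}$ are independent,'' that the reciprocal vectors $(a_{j1}^{-1},a_{j2}^{-1},a_{j3}^{-1})$ are independent. This is exactly where the stated hypothesis looks too weak to me. For instance, take $\omega_{i}=(\cos\theta_{i},\sin\theta_{i},1)$ for six distinct angles $\theta_{i}$: no three of these vectors are linearly dependent, since a line meets the conic $\{x_{1}^{2}+x_{2}^{2}=x_{3}^{2}\}$ in at most two points, yet each $\omega_{i}\omega_{i}^{t}$ lies in the hyperplane $\{\mathrm{tr}(BM)=0\}$ of $\mathrm{Sym}^{2}(\Rb^{3})$ with $B=\mathrm{diag}(1,1,-1)$, so the six matrices are linearly dependent and $\mathrm{rank}\,\wt V=5$. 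Thus ``any three independent'' does not by itself force the six points off every conic. Consequently the plan would have to be completed either by adopting a genuinely stronger hypothesis (e.g.\ that $\omega_{1},\dots,\omega_{6}$ do not lie on a common conic, the Kirillov--Tuy-type condition used in the $\Rb^{3}$ setting of \cite{Denisjuk_Paper}) or by exhibiting the additional non-degeneracy actually available in the tomographic application; making the reciprocal-determinant step go through under the available assumptions is the crux.
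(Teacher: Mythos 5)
Your proposal does not prove the lemma --- but that is because the lemma as stated cannot be proved: your counterexample is correct. For six distinct angles $\theta_{i}$, the vectors $\omega_{i}=(\cos\theta_{i},\sin\theta_{i},1)$ have every three linearly independent (the corresponding $3\times 3$ determinant is twice the area of a triangle inscribed in the unit circle, and three distinct points on a circle are never collinear), while each $\omega_{i}^{\otimes 2}$ is annihilated by the nonzero linear functional $M\mapsto \mathrm{tr}(BM)$ with $B=\mathrm{diag}(1,1,-1)$; hence the six columns of $\wt{V}$ lie in a $5$-dimensional subspace of the symmetric $2$-tensors and $\mathrm{rank}\,\wt{V}\leq 5$. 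In other words, for $m=2$ genericity of six vectors in $\Rb^{3}$ is equivalent to the six projective points not lying on a common conic, which is strictly stronger than ``any three linearly independent''; the same example refutes item (2) of Remark \ref{Examples-remark}. (The Kirillov--Tuy condition itself, Definition \ref{K-T Defn}, and the Example verifying it via a nonvanishing determinant are unaffected, since they impose genericity directly.)

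As for the comparison with the paper's own argument: up to the crux you follow the same route --- expand $\omega_{4},\omega_{5},\omega_{6}$ in the basis $\omega_{1},\omega_{2},\omega_{3}$ and reduce $\mathrm{rank}\,\wt{V}=6$ to invertibility of the $3\times3$ matrix of mixed products $(\alpha_{j}\beta_{j},\ \beta_{j}\gamma_{j},\ \alpha_{j}\gamma_{j})$, $j=4,5,6$ (your ``reciprocal vectors'' computation is a correct reformulation of this, legitimate because the three-wise independence forces all coefficients to be nonzero). The paper then factors this matrix as a Hadamard product of two invertible matrices and invokes $\det(A\times_{H}B)\geq \det(A)\det(B)$. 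That inequality is Oppenheim's inequality, valid only for positive semidefinite matrices; the factors here are neither symmetric nor definite, and for general matrices the inequality is false (take $A=B$ the $2\times2$ antidiagonal matrix of ones: $\det(A\times_{H}B)=-1<1=\det(A)\det(B)$). Your conic example exhibits exactly this failure in the case at hand: both Hadamard factors are invertible, yet their Hadamard product is singular. So the paper's proof breaks precisely at that step, and no argument from the stated hypothesis can close it; the lemma must instead assume the stronger condition you identify (the six points not on a common conic), or be replaced by the almost-everywhere genericity statement actually used elsewhere in the paper.
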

\begin{proof}We can write 
	\[
	\omega_{i} = \alpha_i\omega_{1}+\beta_i\omega_{2}+\g_i\omega_{3}\text{, for } i = 4 ,5 ,6,
	\]
	and we have that the matrix 
	\[\begin{pmatrix}
	\alpha_4 & \beta_4 & \g_4\\
	\alpha_5 & \beta_5 & \g_5\\
	\alpha_6 & \beta_6 & \g_6
	\end{pmatrix}
	\] is invertible. 
	Now it is enough to show that the matrix
	\[
	\begin{pmatrix}
		1 & 0 & 0 & 0 & 0 & 0\\
		0 & 1 & 0 & 0 & 0 & 0\\
		0 & 0 & 1 & 0 & 0 & 0\\
		0 & 0 & 0 & \alpha_4\beta_4 & a_4\beta_4 & \alpha_4a_4\\
		0 & 0 & 0 & \alpha_5\beta_5 & a_5\beta_5 & \alpha_5a_5\\
		0 & 0 & 0 & \alpha_6\beta_6 & a_6\beta_6 & \alpha_6a_6\\
	\end{pmatrix} 
	\]
	is invertible.
	 We can write 
	$$ \begin{pmatrix}
	\alpha_4\beta_4 & \g_4\beta_4 & \alpha_4\g_4\\
	\alpha_5\beta_5 & \g_5\beta_5 & \alpha_5\g_5\\
	\alpha_6\beta_6 & \g_6\beta_6 & \alpha_6\g_6
	\end{pmatrix} = \begin{pmatrix}
	\alpha_4 & \beta_4 & \g_4\\
	\alpha_5 & \beta_5 & \g_5\\
	\alpha_6 & \beta_6 & \g_6
	\end{pmatrix}\times_{H} \begin{pmatrix}
	\beta_4 & \g_4 & \alpha_4 \\
	\beta_5 & \g_5 & \alpha_5\\
	\beta_6 & \g_6 & \alpha_6
	\end{pmatrix} $$
	where  $(A\times_{H} B)_{ij} = A_{ij}\cdot B_{ij}$ is the Hadamard product of matrices. It is well-known that 
	det($ A \times_{H} B) \geq $ det$(A)$det$(B$). Since both matrices on the right hand side are non-singular, the rank of $\wt{V}$ is $6$.
\end{proof}
\subsection{Counter example for $n-1=3$ and $m=3$} We give an example for the case of (3) in Remark \ref{Examples-remark}.
\begin{example}
	Consider the following collection of $L(4,3) = 10$ vectors: 
	\begin{align*}
		& v_1 = (1,0,0,0),v_2 =(0,1,0,0),v_3 =(0,0,1,0),\\
		& v_i = (1, p_i, p_i^2,0) 
						\end{align*}
			for $i= 4, \cdots, 10$ and $p_i$'s are distinct primes.
	 Then any three vectors of them are linearly independent. And it is easy to check that vectors $\{v_i^{\odot 3}\}_{i=1}^{10}$ are not independent.
\end{example}
{\bf Acknowledgments: } The authors are indebted to  Vladmir Sharafutdinov and Eric Todd Quinto for several  fruitful discussions regarding this work.

VK was supported in part by NSF grant DMS 1616564. Both authors benefited from the support of Airbus Corporate Foundation Chair grant titled ``Mathematics of Complex Systems'' established at TIFR CAM and TIFR ICTS, Bangalore, India
\bibliographystyle{plain}
\bibliography{reference}
\end{document}